\numberwithin{equation}{section}
  \theoremstyle{plain}
 \newtheorem{theorem}[equation]{Theorem}
 \newtheorem{lemma}[equation]{Lemma}
 \newtheorem{corollary}[equation]{Corollary}
 \theoremstyle{remark}
 \newtheorem{remark}[equation]{Remark}
\theoremstyle{definition}
 \newtheorem{definition}[equation]{Definition}
\newcommand{\abs}[1]{\left\lvert#1\right\rvert}
\newcommand{\Vol}{{\rm Vol}}
\newcommand{\dR}{\mathds{R}}
\newcommand{\cB}{\mathcal{B}}
\newcommand{\cD}{\mathcal{D}}
\newcommand{\cM}{\mathcal{M}}
\newcommand{\cN}{\mathcal{N}}
\newcommand{\cS}{\mathcal{S}}
\newcommand{\cW}{\mathcal{W}}
\begin{document}

\title{Quantitative Stratification and the\\Regularity of Mean Curvature Flow}

\author{Jeff Cheeger\thanks{The author was partially supported by NSF grant DMS1005552}, Robert Haslhofer and Aaron Naber\thanks{The author was partially supported by NSF postdoctoral grant 0903137}}
\date{}
\maketitle

\begin{abstract}
Let $\cM$ be a Brakke flow of $n$-dimensional surfaces in $\mathbb{R}^N$. The singular set $\cS\subset\cM$ has a stratification $\cS^0\subset\cS^1\subset\ldots\cS$, where $X\in \cS^j$ if no tangent flow at $X$ has more than $j$ symmetries. Here, we define quantitative singular strata $\cS^j_{\eta,r}$ satisfying $\cup_{\eta>0}\cap_{0<r} \cS^j_{\eta,r}=\cS^j$. Sharpening the known parabolic Hausdorff dimension bound $\dim \cS^j\leq j$, we prove the effective Minkowski estimates that the volume of $r$-tubular neighborhoods of $\cS^j_{\eta,r}$ satisfies $\Vol (T_r(\cS^j_{\eta,r})\cap B_1)\leq Cr^{N+2-j-\varepsilon}$. Our primary application of this is to higher regularity of Brakke flows starting at $k$-convex smooth compact embedded hypersurfaces. To this end, we prove that for the flow of $k$-convex hypersurfaces, any backwards selfsimilar limit flow with at least $k$ symmetries is in fact a static multiplicity one plane.  Then, denoting by $\cB_r\subset\cM$ the set of points with regularity scale less than $r$, we prove that $\Vol(T_r(\cB_r))\leq C r^{n+4-k-\varepsilon}$. This gives $L^p$-estimates  for the second fundamental form for any $p<n+1-k$.  In fact, the estimates are much stronger and give $L^p$-estimates for the inverse of the regularity scale.  These estimates are sharp.  The key technique that we develop and apply is a parabolic version of the quantitative stratification method introduced in \cite{CheegerNaber_Ricci,CheegerNaber_HarmonicMinimal}.
\end{abstract}

\small{\tableofcontents}


\section{Introduction and main results}

In this paper, we prove estimates and quantitative regularity results for the mean curvature flow of $n$-dimensional surfaces in $\dR^N$ \cite{Brakke_book,Huisken_convex}. Smooth solutions of the mean curvature flow are given by a smooth family of submanifolds $M^n_t\subset \dR^N$ satisfying the evolution equation,
\begin{equation}\label{eqn_MCF}
\partial_t x=H(x),\qquad x\in M_t \, .
\end{equation}
More generally, $M_t$ is a family of Radon-measures that is integer $n$-rectifiable for almost all times and satisfies (\ref{eqn_MCF}) in the weak sense of Brakke, i.e.
\begin{equation}\label{eqn_weakMCF}
\overline{D}_t\int \varphi dM_t\leq \int\left(-\varphi H^2+\nabla \varphi\cdot H\right) dM_t
\end{equation}
for all nonnegative test functions $\varphi$, where $\overline{D}_t$ is the limsup of difference quotients, see Section \ref{prelim} for details. Brakke flows enjoy wonderful existence and compactness properties, see the fundamental work of Brakke and Ilmanen \cite{Brakke_book,Ilmanen_EllipticRegularization}.  The main problem is then to investigate their regularity.\\

Our results build upon the deep regularity theory of Brian White \cite{White_anno,White_stratification,White_dimension,White_nature,White_subsequent}, see below for a short summary. The main new technique that we develop and apply is, with numerous tweaks and complementary results, a parabolic version of the quantitative stratification method introduced in \cite{CheegerNaber_Ricci,CheegerNaber_HarmonicMinimal} (see also \cite{cheegerqdg} for a general perspective). This method enables us to turn infinitesimal statements arising from blowup arguments and dimension reduction, into much more quantitative estimates.\\

To outline our main results we begin with a brief and imprecise recollection of the stratification of a Brakke flow, see Section \ref{ss:basic_notation} for a rigorous account.  Given a Brakke flow $\cM$, recall that there exists at every point $X\in \cM$ a tangent flow $\cM_X$ obtained by blow up at $X$.  The tangent flows at $X$ need not be unique, but all tangent flows are self-similar.  One may proceed to separate points $X$ of $\cM$ into strata $\cS^j(\cM)$ based on the number of symmetries for the tangent flows at $X$, again see Section \ref{ss:basic_notation} for a rigorous definition.  At least in spirit, for instance if $\cM$ were a stratified space, then this stratification agrees with the natural one based on the singularities of $\cM$.  For general Brakke flows White first proved the (parabolic) Hausdorff dimension estimate
\begin{align}\label{e:dim}
\dim \cS^j(\cM)\leq j\, .
\end{align}

Our first main result, Theorem \ref{t:Brakke_quant_strat}, valid for Brakke flows $\cM=\{(M_t,t)|t\in I\}$ in general codimension and without additional assumptions, is a quantitative refinement of (\ref{e:dim}).  Specifically, we introduce in Section \ref{ss:quant_reg} what we call the quantitative strata $\cS^j_{\eta,r}(\cM)$ of $\cM$ (Definition \ref{d:Brakke_singularstrata}).  The quantitative stratification does not see the infinitesimal behavior of $\cM$, and instead separates point of $\cM$ based on the {\it almost symmetries} of parabolic balls of $\cM$ of definite radius.  Nonetheless, it is possible to recover the standard stratification from the quantitative stratification, see (\ref{relation}).  For the quantitative stratification we will replace (\ref{e:dim}) with much stronger estimates on the volume of tubular neighborhoods of the quantitative singular strata.  Using (\ref{relation}) we can recover the estimates of (\ref{e:dim}) from this.  In fact, an easy consequence of this will be to replace Hausdorff dimension estimates for the singular set with the much stronger Minkowski dimension estimates.  The quantitative stratification also plays a key role in our regularity theory for the mean curvature flow of $k$-convex hypersurfaces, see below for a general description of this and Section \ref{ss:quant_reg} for rigorous statements.\\

The proof of the estimates of Theorem \ref{t:Brakke_quant_strat} require several new techniques, originally introduced in \cite{CheegerNaber_Ricci,CheegerNaber_HarmonicMinimal} and modified appropriately here to the parabolic setting.  We develop a new blow up technique for mean curvature flows, distinct from those of the standard dimension reduction.  The blow up technique allows for a quantitative refinement, which does not seem accessible by the ideas of the standard dimension reduction.  The key ideas are that of an {\it energy decomposition}, {\it cone-splitting}, and {\it quantitative differentiation}.  We begin by proving an quantitative version of the statement that every tangent flow is self-similar.  Essentially, at any given point we have that away from a definite number of scales that every parabolic ball is {\it almost} self-similar.  The proof of this statement is surprisingly short, and based on a quantitative differentiation argument.  If we refer to as {\it bad} those scales at a point $X$ for which the parabolic ball is not almost self-similar, then the energy decomposition of Section \ref{ss:energy_decomposition} is based on grouping those points of a Brakke flow $\cM$ with the same {\it bad} scales.  The estimates of Theorem \ref{t:Brakke_quant_strat} turn out to be more easily provable on each piece of this decomposition, because such points on good scales interact with one another to force higher order symmetries, an idea we call {\it cone-splitting}.  The final step of the proof is then to show that the number of elements to the energy decomposition is much smaller than one would initially have expected, a result equivalent to the bound on the number of {\it bad} scales from above, and so we can sum the estimates over each piece separately to obtain Theorem \ref{t:Brakke_quant_strat}.\\ 

Our primary application of the quantitative stratification is to the higher regularity theory of some mean curvature flows.  For such applications we focus on the case where the Brakke flow starts at a smooth compact embedded hypersurface $M_0^n\subset \dR^{n+1}$ satisfying some convexity assumption. We consider the case that $M_0$ is $k$-convex, i.e. $\lambda_1+\ldots \lambda_k\geq 0$, where $\lambda_1\leq\ldots\leq \lambda_n$ are the principal curvatures, see e.g. \cite{HuiskenSinestrari_Convexity}. Special instances are the $1$-convex case ($A\geq 0$), where the mean curvature flow converges to a round point \cite{Huisken_convex}, the 2-convex case in which Huisken-Sinestrari proved the existence of a mean curvature flow with surgery \cite{HuiskenSinestrari_Surgery}, and the general mean-convex case ($h\geq 0$) for $k=n$ to which White's regularity theory applies.  \\

Our second main result, Theorem \ref{t:Brakke_cylinders}, states that in the $k$-convex case any backwards selfsimilar limit flow with at least $k$ symmetries is in fact a static multiplicity one plane.  This has been understood previously at the first singular time by use of a maximum principle, but we extend this result into the singular region by using ideas of elliptic regularization from \cite{Ilmanen_EllipticRegularization,White_subsequent}.  A principal application of this result is the proof of a new $\varepsilon$-regularity theorem for $k$-convex mean curvature flows, see Theorem \ref{t:Brakke_epsregularity}.  In short, the theorem states that if in some parabolic ball the mean curvature flow contains enough almost symmetries, then the flow is smooth with definite estimates on a slightly smaller ball.  This will be combined with the quantitative stratification in order to prove our regularity theory for $k$-convex flows.\\

Our third main result, also in the $k$-convex case, is Theorem \ref{t:Brakke_regularity}, which is our main regularity theorem.  It arises as a combination of the quantitative stratification of Theorem \ref{t:Brakke_quant_strat} and the $\varepsilon$-regularity which results from Theorem \ref{t:Brakke_cylinders}.  Roughly, the assertion is that away from a set of small volume the Brakke flow can be written as a smooth single valued graph of definite size with definite estimates.  A little more precisely, let us define the regularity scale $r_{\cM}(X)$ as the supremum of $0\leq r\leq 1$ such that $M_{t'}\cap B_r(x)$ is a smooth graph for all $t-r^2< t'< t+r^2$ and such that
\begin{equation}
\sup_{X'\in\cM\cap B_r(X)} r\, \abs{A}\!(X')\leq 1\, ,
\end{equation}
see also Definition \ref{d:regularity_scale}.  Then Theorem \ref{t:Brakke_regularity} obtains sharp $L^p$ estimates for $r_{\cM}^{-1}$.  As a consequence, though in fact this statement is much weaker, we obtain sharp $L^p$-estimates for the second fundamental form and its derivatives, see Corollary \ref{t:Brakke_estimates}.\\

\begin{remark}
Although we concentrate here on the mean curvature flow, the quantitative stratification techniques can be applied also to other nonlinear evolution equations. In particular, similar results hold for the harmonic map flow. We will discuss this in a subsequent paper \cite{CHN_hmf}.
\end{remark}

\subsection{Basic notions and the regularity theory of Brian White}\label{ss:basic_notation}

Before giving the technical statements of our main results on the mean curvature flow, let us collect some basic notions and some fundamental regularity results from the work of Brian White \cite{White_anno,White_stratification,White_dimension,White_nature,White_subsequent}: Since the mean curvature flow is a parabolic equation, time scales like distance squared and it is natural to equip the spacetime $\dR^{N,1}=\dR^N\times \dR$ with the metric
\begin{align}\label{d:distance}
d((x,t),(y,s))=\max (\abs{x-y},\abs{t-s}^{1/2}).
\end{align}
All neighborhoods, tubular neighborhoods, Hausdorff dimensions, etc. will be with respect to this metric, e.g. $B_r(0^{N,1})=B_r(0^N)\times(-r^2,r^2)$, $T_r(\dR^N\times\{0\})=\{(x,t)\in\dR^{N,1}: \abs{t}<r^2\}$, and $\dim(\{0\}\times\dR)=2$. We write $X=(x,t)$ for points in spacetime and $\Vol$ for the $(N+2)$-dimensional Hausdorff measure on $(\dR^{N,1},d)$.\\

For $X\in \cM$  and $r>0$ we let $\cM_{X,r}:= \cD_{1/r}(\cM-X)$, where $-X$ denotes translation and $\cD_\lambda(x,t)=(\lambda x,\lambda^2 t)$ denotes parabolic dilation. If $X_\alpha\in \cM$ converges to $X$ and $r_\alpha\to 0$, then $\cM^\alpha:=\cM_{X_\alpha,r_\alpha}$ is called a {\it blowup sequence} at $X$. After passing to a subsequence $\cM^\alpha$ converges (in the sense of Brakke flows \cite[Sect. 7]{Ilmanen_EllipticRegularization}) to a {\it limit flow} $\cN$. If $X_\alpha=X$ for all $\alpha$, then $\cN$ is called a {\it tangent flow}. Tangent flows may not be unique, but they always exist at any given point and their backwards portion $\cN^-=\{(N_t,t):t<0\}$ is always selfsimilar, i.e. $\cN^-=\cD_\lambda\cN^-$ for all $\lambda>0$. In particular, $\cN^-$ is determined by $N_{-1}$. The number $d(\cN)$ of spatial symmetries is the maximal $d$ such that $N_{-1}$ splits off a $d$-plane $V$ (possibly with multiplicity). \\

With respect to the dimensional behavior in the time direction, there are two exceptional cases. Namely, it can happen that $N_{-1}$ is a stationary cone. Then $\cN=N_{-1}\times\dR$ or $\cN=N_{-1}\times(-\infty,T]$ for some $T\geq 0$. In these cases $\cN$ is called \emph{static} or \emph{quasistatic} respectively. Otherwise $\cN$ is called a \emph{shrinker}.
Following White, we consider the number of spacetime symmetries,
\begin{equation}
D(\cN)= \left\{ \begin{array}{ll}
d(\cN) & \textrm{if $\cN$ is a shrinker or quasistatic}\\
d(\cN)+2 & \textrm{if $\cN$ is static.}\\
\end{array} \right.
\end{equation}

The number $D(\cN)$ is the dimension of the subset of points $X\in\cN$ such that $\cN_X\equiv \cN$.  We say that $\cN$ is $j$-selfsimilar, if its backwards portion is selfsimilar and $D(\cN)\geq j$. Note that every tangent flow is $0$-selfsimilar. Now, following White again, we define a stratification of the singular set $\cS(\cM)$,
$$
\cS^0(\cM)\subseteq \cS^1(\cM)\subseteq\cdots\subseteq \cS^{n+1}(\cM)\subseteq \cS(\cM)\subseteq\cM\, ,
$$
where by definition, $X\in \cS^j(\cM)$ if and only if no tangent flow at $X$ is $(j+1)$-selfsimilar. \\

For general Brakke flows of integral varifolds, recall that White proved the (parabolic) Hausdorff dimension estimate
\begin{equation}\label{White_hausdorffest}
\dim\, \cS^j(\cM)\leq j\, ,
\end{equation}
see \cite{White_stratification}. For the flow of mean-convex hypersurfaces, he proved the deep result
\begin{equation}
\cS(\cM)=\cS^{n-1}(\cM),
\end{equation}
and thus that the singular set has (parabolic) Hausdorff dimension at most $n-1$ \cite{White_dimension}. This is based on many clever arguments, in particular the ruling out of tangent flows of higher multiplicities. Some of his key steps are an expanding hole theorem, a Bernstein-type theorem and a sheeting theorem. In his later articles \cite{White_nature, White_subsequent} he gives a precise description of the singularities in this mean convex case: He proves that all tangent flows are spheres, cylinders or planes of multiplicity one.

\subsection{Quantitative regularity results for the mean curvature flow}\label{ss:quant_reg}

Given $\eta>0$ and $0<r<1$, we define a quantitative version $\cS^j_{\eta,r}(\cM)$ of the singular strata $\cS^j(\cM)$. The criterion for membership of $X\in\cM$ in $\cS^j_{\eta,r}(\cM)$ involves the behavior of $\cM$ on $B_s(X)$ for all $r\leq s\leq 1$. To state it, we fix a suitable distance function $d_{\cB}$ on the space of Brakke flows on $B_1(0^{N,1})$ (see Section \ref{prelim}).

\begin{definition}\label{d:Brakke_singularstrata}
 For each $\eta>0$ and $0<r<1$, we define the $j$-th quantitative singular stratum
\begin{align}
\cS^j_{\eta,r}(\cM):=\{X\in \cM: d_{\cB}(\cM_{X,s},\cN)>\eta\,\,{\rm for\,\, all}
\,\,r\leq s\leq 1\,
{\rm and \,\,all}\, (j+1)\text{-selfsimilar } \cN\}.
\end{align}
\end{definition}
\noindent The distance above is the Brakke flow distance introduced in Section \ref{ss:dist_Brakke}.  The quantitative singular strata satisfy
\begin{equation}
\cS^{j}_{\eta,r}(\cM)\subset \cS^{j'}_{\eta',r'}(\cM)\quad
({\rm if}\,\,j\leq j',\,\,\eta\geq\eta',\,\, r\leq r')\, ,\label{containments}
\end{equation}
and
\begin{equation}
\cS^j(\cM)=\bigcup_\eta \,\,\bigcap_r \, \cS^j_{\eta,r}(\cM)\, .\label{relation}
\end{equation}
The inclusion (\ref{containments}) follows directly from the definitions, the relation (\ref{relation}) is a bit more tricky and will be proved in Section \ref{ss:dist_Brakke}.  This is also a good test case to show that our choice of distance function $d_{\cB}$ is sensible. 

Our first main theorem gives an estimate for the volume of tubular neighborhoods of the quantitative singular strata.  In the following we let $\cB^n_\Lambda(B_2(0^{N,1}))$ denote the Brakke flows with total mass bounded by $\Lambda$ in $B_2(0^{N,1})$, see Section \ref{prelim} for a precise definition.

\begin{theorem}
\label{t:Brakke_quant_strat}
For all $\varepsilon,\eta>0$, $\Lambda<\infty$ and integers $n<N$, there exists a constant $C=C(\varepsilon,\eta,\Lambda,n,N)<\infty$ such that:
If $\cM\in\cB^n_\Lambda(B_2(0^{N,1}))$ is a Brakke flow of $n$-dimensional integral varifolds in $B_2(0^{N,1})\subset\dR^{N,1}$ with mass at most $\Lambda$, then its $j$-th quantitative singular stratum satisfies
\begin{equation}\label{Brakke_volest}
\Vol\left(T_r(\cS^j_{\eta,r}(\cM))\cap B_1(0^{N,1})\right)\leq Cr^{N+2-j-\varepsilon}\qquad (0<r<1)\, .
\end{equation}
\end{theorem}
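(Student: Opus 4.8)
The plan is to adapt the quantitative stratification scheme of \cite{CheegerNaber_Ricci,CheegerNaber_HarmonicMinimal} to the parabolic setting. The whole of (\ref{Brakke_volest}) reduces to a covering statement: it suffices to cover $\cS^j_{\eta,r}(\cM)\cap B_1(0^{N,1})$ by at most $Cr^{-j-\varepsilon}$ parabolic balls of radius $r$, since then $T_r(\cS^j_{\eta,r}(\cM))\cap B_1$ is covered by $Cr^{-j-\varepsilon}$ balls of radius $2r$, each of $\Vol$ comparable to $r^{N+2}$. Two preliminary ingredients feed the covering. First, \emph{quantitative differentiation}: by White's theory and the mass bound the Gaussian density $\Theta(\cM,X,s)$ is monotone in $s$ with $1\le\Theta(\cM,X,s)\le C(\Lambda)$ on the support, so on dyadic scales $s=\rho^i$ (for a small $\rho=\rho(\varepsilon,n,N)$ fixed below) there are, at each point $X$, at most $L=L(\Lambda,\delta)$ \emph{$\delta$-bad} scales, i.e., scales at which $\Theta(\cM,X,\cdot)$ varies by more than $\delta$ on $[\rho s,\rho^{-1}s]$; at every other, \emph{good}, scale the density is $\delta$-pinched there. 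Second, \emph{almost-rigidity}: a compactness argument built on the equality case of Huisken's monotonicity formula (constant Gaussian density $\Leftrightarrow$ self-shrinker, cf.\ \cite{Ilmanen_EllipticRegularization}) shows that $\delta$-pinching at scale $s$ about $X$ forces $\cM_{X,s}$ to be $\beta$-close in $d_{\cB}$ to a self-similar flow, with $\beta=\Psi(\delta)\to0$ as $\delta\to0$. This is the step the introduction describes as short.

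The geometric heart is \emph{cone-splitting}. It asserts: if $\cM_{X,s}$ is $\beta$-close to symmetric about a $d$-plane $V$ through $X$ and also $\beta$-close to self-similar about a point $X'\in B_s(X)$ with $\dist(X',X+V)>\tau' s$, then $\cM_{X,s}$ is $(d+1)$-self-similar up to an error that tends to $0$ as $\beta\to0$ (with $\tau'$ fixed); iterating, $\beta$-almost self-similarity about $j+2$ points that $\tau'$-span a $(j+1)$-simplex in $B_s(X)$ makes $\cM_{X,s}$ $(j+1)$-self-similar up to an error $\to0$ with $\beta$. Applied contrapositively: at a good scale $s\ge r$, the set $\cS^j_{\eta,r}(\cM)\cap B_s(X)$ — on which $\cM_{Y,s}$ is $\beta$-close to self-similar by almost-rigidity, yet, by definition of $\cS^j_{\eta,r}$, never $\eta$-close to $(j+1)$-self-similar — cannot $\tau'$-span a $(j+1)$-simplex, so it lies in the $\tau s$-neighbourhood of a $j$-dimensional (in the parabolic sense) affine subspace of spacetime, with $\tau=\tau(\beta,\eta,n,N)\to0$ as $\beta\to0$. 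One proves cone-splitting by contradiction and Brakke compactness: a limit flow would be exactly self-similar about $j+2$ independently spread points, and the parabolic cone-splitting identity — a self-shrinker which is self-similar about a second point off its spine acquires a translational symmetry, the extra temporal symmetry in the static case being bookkept by $D(\cN)=d(\cN)+2$ — forces the limit to be $(j+1)$-self-similar, contradicting the persistence of the $\eta$-gap.

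It remains to combine these via an \emph{energy decomposition} and an iterated covering. Choose $\rho=\rho(\varepsilon,n,N)$ small enough that the dimensional covering constant $c_0=c_0(N)$ satisfies $c_0^{K}\le r^{-\varepsilon/2}$ over the $K\sim\log_{1/\rho}(1/r)$ scales, then $\beta$ small enough that $\tau<\rho$, then $\delta$ with $\Psi(\delta)<\beta$, fixing $L=L(\Lambda,\delta)$. Partition $\cS^j_{\eta,r}(\cM)$ according to the set $\bar S\subset\{0,\dots,K\}$, $|\bar S|\le L$, of $\delta$-bad scales of a point; there are at most $(K+1)^L\le C_\varepsilon r^{-\varepsilon/2}$ pieces. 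On each piece $\cS(\bar S)$ build the covering inductively over $s=\rho^0,\rho^1,\dots,\rho^K\approx r$: at a good scale each radius-$s$ ball is replaced by $\le c_0\rho^{-j}$ radius-$\rho s$ balls (using that, by the previous paragraph, $\cS(\bar S)\cap B_s$ sits in a $\tau s<\rho s$ tube of a $j$-plane), and at a bad scale by the trivial $\le c_0\rho^{-(N+2)}$ balls. This produces $\le c_0^K\rho^{-jK}\rho^{-(N+2-j)L}$ balls of radius $\approx r$; since $\rho^{-jK}\le Cr^{-j}$ and $\rho^{-(N+2-j)L}$ is a constant, this is $\le Cr^{-j-\varepsilon/2}$ per piece, hence $\le Cr^{-j-\varepsilon}$ in total, which is (\ref{Brakke_volest}).

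The hard part will be cone-splitting: reconciling quantitative "almost-symmetry" with the parabolic dilation structure and the static/shrinker/quasistatic trichotomy, and ensuring the "spanning $\Rightarrow$ extra symmetry" mechanism survives passage to weak (Brakke) limits — together with making the almost-rigidity for the weak monotonicity formula precise. Granted these two analytic inputs, the energy decomposition and the calibration of $\rho,\beta,\delta$ are essentially bookkeeping.
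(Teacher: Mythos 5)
Your proposal follows the same overall architecture as the paper's proof: quantitative differentiation (bounding the number of $\delta$-bad scales via monotonicity of the Gaussian density), almost-rigidity/compactness (near-constant density $\Rightarrow$ near-self-similarity), a cone-splitting principle forcing extra symmetry, an energy decomposition into pieces indexed by the pattern of good/bad scales, and an iterated covering calibrated so the polynomial growth in the number of pieces is absorbed into $r^{-\varepsilon}$. All of this is correct and matches the paper.

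However, there is a genuine gap in how you handle the quasistatic case, and you appear to notice it only informally without closing it. When $\cM_{X,s}$ is close to an $\ell$-self-similar flow that is \emph{quasistatic}, the model set $W_X = (x+V^\ell)\times(-\infty, T]$ is \emph{not} an $\ell$-dimensional parabolic affine subspace — it has parabolic Hausdorff dimension $\ell+2$ (a half-line in time is still $2$-dimensional parabolically). Your covering step asserts that $\cS(\bar S)\cap B_s(X)$ "sits in a $\tau s$-tube of a $j$-plane," and that a $\tau s$-tube of a $j$-plane in $B_s$ can be covered by $\le c_0\rho^{-j}$ balls of radius $\rho s$. This fails for the tube around $V^\ell\times(-\infty,T]$: the time direction adds an extra $\rho^{-2}$ factor and the covering count would be $\rho^{-(\ell+2)}$, which ruins the final exponent. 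Cone-splitting alone does not rescue this — a point off the spatial spine of a quasistatic flow but at an earlier time promotes the flow to a larger quasistatic or static one, but it does not remove the temporal extent of $W$.

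What is needed, and what the paper supplies as a separate lemma (Lemma \ref{l:Brakke_quasistatic}, "Quantitative behavior in the quasistatic case"), is the following observation: if $\cM$ is nearly $\ell$-self-similar and quasistatic with respect to $V^\ell\times(-\infty,T]$, then any point $Y=(y,s)$ in $\cM$ with $s$ a definite amount before $T$ is, after rescaling, nearly \emph{$(\ell+2)$-self-similar and static}. Applied to a point $Y\in\cS^j_{\eta,r}$ (with $\ell\ge j-1$), this would contradict membership in $\cS^j_{\eta,r}$, so all such $Y$ must lie within $O(\rho s)$ of the final time-slice $\{t=T\}$. Only after pinning the time coordinate this way does the tube around $W$ reduce to (effectively) a tube around the $\ell$-dimensional set $(x+V^\ell)\times\{T\}$, giving the $\rho^{-\ell}\le\rho^{-j}$ covering count. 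Your proposal flags the static/shrinker/quasistatic trichotomy as "the hard part," but does not actually provide this argument; as written, the covering estimate $\le c_0\rho^{-j}$ per good scale is unjustified in the quasistatic branch.
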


\begin{remark} By virtue of (\ref{relation}), we recover the Hausdorff dimension estimate (\ref{White_hausdorffest}), and in fact improve this to a Minkowski dimension estimate. Of course, our theorem contains more quantitative information about the singular set than just its dimension.
\end{remark}

In our applications we focus on Brakke flows starting at $k$-convex smooth compact  embedded hypersurfaces. Building on the work of White via elliptic regularization (see Ilmanen \cite{Ilmanen_EllipticRegularization}) we prove:

\begin{theorem}\label{t:Brakke_cylinders}
Let $M_0^n\subset\mathbb{R}^{n+1}$ be a $k$-convex smooth compact embedded hypersurface and let $\cM$ be the Brakke flow starting at $M_0$. Then any $k$-selfsimilar limit flow is in fact a static multiplicity one plane. In particular, for every singular point $X\in\cS(\cM)$ all tangent flows are shrinking spheres or cylinders
\begin{equation}
\dR^j\times S^{n-j}\qquad \mathrm{with} \qquad 0\leq j < k\, .
\end{equation}
\end{theorem}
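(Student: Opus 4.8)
The plan is to establish the key assertion---that any $k$-selfsimilar limit flow of the $k$-convex flow is a static multiplicity one plane---and then deduce the classification of tangent flows at singular points. The natural strategy is to reduce to the first singular time, where the maximum principle is available, via elliptic regularization.

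First I would recall that $k$-convexity, i.e.\ $\lambda_1+\dots+\lambda_k\geq 0$, is preserved under mean curvature flow (and indeed under the level-set/Brakke flow obtained by elliptic regularization): this follows from the evolution equation for the symmetric function $\lambda_1+\dots+\lambda_k$ together with the maximum principle, exactly as in \cite{HuiskenSinestrari_Convexity,HuiskenSinestrari_Surgery}. The subtle point is that a general limit flow---obtained by blowing up at a spacetime point $X_\alpha\to X$ with $r_\alpha\to 0$---is a weak limit of rescalings and a priori only an abstract Brakke flow. Here I would invoke the elliptic regularization construction of Ilmanen \cite{Ilmanen_EllipticRegularization} and White \cite{White_subsequent}: the Brakke flow starting at $M_0$ arises as a limit of smooth flows (the translating graphs $\widetilde M^\varepsilon\subset\dR^{n+1}\times\dR$ descending under MCF in $\dR^{n+2}$), which inherit a one-sided curvature bound and, crucially, the $k$-convexity in a form that passes to limits. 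So any limit flow $\cN$ is itself, after a further application of elliptic regularization / the compactness of the relevant class, a limit of smooth $k$-convex flows, hence $k$-convex in the appropriate weak sense, and (being a shrinker, quasistatic, or static) its time slice $N_{-1}$ is a self-shrinker that is $k$-convex.

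The heart of the matter is then: a $k$-convex self-shrinker (or stationary cone, in the static/quasistatic case) which is $k$-selfsimilar---meaning $D(\cN)\geq k$, so $N_{-1}$ splits off $\dR^j$ with $j\geq k$ if it is a shrinker or quasistatic, or $j\geq k-2$ if static---must be a plane. If $\cN$ is a shrinker, $N_{-1}=\dR^j\times \Sigma^{n-j}$ with $j\geq k$, where $\Sigma$ is an $(n-j)$-dimensional self-shrinker in $\dR^{n-j+1}$; the $k$-convexity of $\dR^j\times\Sigma$ forces $\lambda_1+\dots+\lambda_k\geq 0$, and since the first $j\geq k$ principal curvatures of the product are zero, this is automatic---so I need a further input. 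The point is that a closed self-shrinker (or the cross-section of a shrinking cylinder) has a principal curvature direction with strictly negative $\lambda_1$ somewhere unless it is flat: by Huisken's classification \cite{Huisken_convex} and the fact that any compact factor $\Sigma^{n-j}$ of a shrinker with $n-j\geq 1$ has $\lambda_1<0$ somewhere (it is a closed hypersurface, hence not everywhere convex in the wrong direction, but its mean curvature $H=\langle x,\nu\rangle/2$ forces it to curve---concretely $S^{n-j}$ has all principal curvatures of one sign, giving $\lambda_1+\dots+\lambda_k<0$ when $k>j$ and the sphere is oriented as the boundary of its convex hull). More carefully: if $j\geq k$ then $\dR^j\times S^{n-j}(\sqrt{2(n-j)})$ \emph{is} $k$-convex (the $k$ smallest curvatures are among the flat directions), so $k$-convexity alone does \emph{not} rule out $j=k$ cylinders---and indeed the theorem's conclusion only rules out flows with \emph{at least} $k$ symmetries being non-planar, while permitting $\dR^j\times S^{n-j}$ with $j<k$. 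So the correct statement to prove is that $k$-selfsimilar forces $j\geq k$, and a $k$-convex shrinker with a flat factor of dimension $\geq k$ that is \emph{not} a plane would have to be $\dR^j\times\Sigma$ with $\Sigma$ a lower-dimensional shrinker carrying at least one negative principal curvature, but then reorienting, $\lambda_1+\dots+\lambda_k$ includes that negative curvature (since only $j$ curvatures vanish and $j$ could equal $k$---here I use that for $j=k$ exactly, a nonflat $\Sigma^{n-k}$ contributes $\lambda_1<0$, making $\lambda_1+\dots+\lambda_k<0$). The quasistatic and static cases are handled by noting a stationary cone which is $k$-convex and splits off enough dimensions must be a plane by the same curvature argument applied to its link, together with White's Bernstein-type and sheeting theorems \cite{White_dimension} to upgrade the cone to multiplicity one and smoothness.

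The main obstacle, and where I would spend the most care, is the passage of $k$-convexity through the elliptic regularization limit into the singular region---this is precisely the step the authors flag as new ("we extend this result into the singular region by using ideas of elliptic regularization"). Concretely: one must show that the translating approximations $\widetilde M^\varepsilon$ can be taken $k$-convex (or that $k$-convexity of $M_0$ implies a usable curvature condition on them uniform in $\varepsilon$), that this survives under the smooth MCF of the $(n+1)$-dimensional translators in $\dR^{n+2}$, and that it is stable under the weak (Brakke/varifold) limit $\varepsilon\to 0$ and under subsequent blowups. Once $k$-convexity of every limit flow is secured, the classification of $k$-convex self-shrinkers with many symmetries reduces to the compact case via splitting, and there Huisken's theorem together with the sign of the principal curvatures of $S^m$ finishes it. Finally, the tangent flow statement follows immediately: at a singular point $X$, a tangent flow $\cN_X$ cannot be a plane (else $X$ is regular by White's $\varepsilon$-regularity), so $\cN_X$ is not $k$-selfsimilar, hence $\cN_X=\dR^j\times S^{n-j}$ with $0\leq j<k$, using White's prior classification \cite{White_nature,White_subsequent} of tangent flows of mean-convex flows as spheres, cylinders, or planes of multiplicity one, which applies since $k$-convex $\Rightarrow$ mean-convex.
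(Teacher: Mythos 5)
There is a genuine gap, and you in fact put your finger on it yourself in the middle of the proposal without resolving it. You correctly observe that $\dR^j\times S^{n-j}$ with $j\geq k$ \emph{is} (weakly) $k$-convex, since its $k$ smallest principal curvatures are all zero. So ``every limit flow is $k$-convex'' cannot, by itself, rule out the cylinders you need to rule out. Your subsequent reasoning---that a nonflat compact factor $\Sigma$ ``carries at least one negative principal curvature''---is wrong: by White's classification the only possibility for the cross-section is $S^{n-j}$, and its principal curvatures are all positive. The attempt to repair the argument by appealing to weak $k$-convexity plus signs of curvatures of the sphere does not close. What is actually needed is a \emph{strict} inequality: one must show that the ratio $(\lambda_1+\dots+\lambda_k)/h$ is bounded below by a positive constant $c>0$ on the regular part of the flow for $t\geq T/2$, which then excludes $\dR^j\times S^{n-j}$ for $j\geq k$ because on those the ratio is exactly $0$. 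You flag the ``usable curvature condition'' as the thing to find but never identify it.

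The paper's actual argument works with the elliptic regularization surfaces directly. Let $N^\alpha_0\subset\overline\Omega\times\dR$ minimize $\int_N e^{-\alpha x\cdot e_{n+2}}\,dA$ with boundary $\partial\Omega\times\{0\}$; by mean-convexity this is a smooth graph with $h>0$, satisfying the Euler--Lagrange equation $H=-\alpha e_{n+2}^\perp$. That elliptic equation implies the tensor $A/h$ satisfies $\Delta(A/h)=-2\langle\nabla\log h,\nabla(A/h)\rangle$, with no zeroth-order term. Since $(\lambda_1+\dots+\lambda_{k+1})/h$ is a concave function of $A/h$, the tensor maximum principle gives that its minimum over $N^\alpha_0\cap(K\times\dR)$ is attained on the boundary $\partial K\times\dR$, where it is controlled by the smooth initial data. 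Taking the translating flows $N^\alpha_t=\mathrm{graph}(f_\alpha-\alpha t)$, letting $\alpha\to\infty$, and using the local regularity theorem to get smooth convergence at regular points, one obtains $(\lambda_1+\dots+\lambda_{k+1})/h\geq c$ on the regular part of $M_t\times\dR$; splitting off the $\dR$-factor lowers $k+1$ to $k$ and gives the estimate (\ref{ellregest}). The conclusion for general limit flows then follows from \cite[Thm.\ 1]{White_nature} and \cite{White_subsequent} exactly as you sketch in your final paragraph, which is the one part of your proposal that matches the paper closely. The missing ingredient, then, is precisely this quantitative lower bound on the curvature ratio obtained by the elliptic maximum principle on the translator graphs---not weak $k$-convexity preservation.
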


\begin{remark}
Here we tacitly assume that $\cM$ is the unique Brakke flow without mass-drop, see Section \ref{ss:meanconvex}.
\end{remark}

\noindent
The above classification of tangent flows gives a bound for the (parabolic) Hausdorff dimension of the singular set:
\begin{corollary}\label{t:Brakke_cylinderssize}
Let $\cM$ be a Brakke flow starting at a $k$-convex smooth compact embedded hypersurface. Then
\begin{equation}
\cS(\cM)=\cS^{k-1}(\cM)\qquad\textrm{and}\qquad\dim \cS(\cM)\leq k -1\, .
\end{equation}
\end{corollary}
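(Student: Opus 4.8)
The plan is to derive the Corollary directly from Theorem \ref{t:Brakke_cylinders}, combined with White's local regularity theorem for Brakke flows and his Hausdorff dimension bound \eqref{White_hausdorffest}. The target is the set equality $\cS(\cM)=\cS^{k-1}(\cM)$, from which the dimension statement is automatic. One inclusion, $\cS^{k-1}(\cM)\subseteq\cS(\cM)$, is built into the definition of the stratification, so the real point is to show that every singular point lies in $\cS^{k-1}(\cM)$, i.e.\ that no tangent flow at a singular point is $k$-selfsimilar.

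For this, let $X\in\cS(\cM)$ and suppose toward a contradiction that some tangent flow $\cN$ at $X$ is $k$-selfsimilar. A tangent flow is in particular a limit flow, so Theorem \ref{t:Brakke_cylinders} applies and forces $\cN$ to be a static multiplicity one plane. But a multiplicity one plane has Gaussian density ratio $1$, so by Huisken's monotonicity and upper semicontinuity of the density all relevant density ratios are close to $1$ in a small parabolic neighborhood of $X$; White's local regularity theorem then shows that $\cM$ is smooth near $X$ with curvature estimates, i.e.\ $X$ is a regular point. This contradicts $X\in\cS(\cM)$. Hence no tangent flow at $X$ is $k$-selfsimilar, which is exactly the membership condition for $\cS^{k-1}(\cM)$. (Alternatively, one may simply invoke the ``in particular'' assertion of Theorem \ref{t:Brakke_cylinders}: at a singular point all tangent flows are shrinking $\dR^j\times S^{n-j}$ with $j<k$, which are shrinkers with $D=j\leq k-1$ and hence not $k$-selfsimilar.)

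Combining the two inclusions yields $\cS(\cM)=\cS^{k-1}(\cM)$, and then applying \eqref{White_hausdorffest} with $j=k-1$ gives $\dim\cS(\cM)=\dim\cS^{k-1}(\cM)\leq k-1$. As for obstacles: essentially all of the work is already packaged into Theorem \ref{t:Brakke_cylinders}, whose proof via elliptic regularization rules out the higher-symmetry limit flows; the present deduction is purely formal bookkeeping. The only points deserving a line of care are that a tangent flow counts as a limit flow so that Theorem \ref{t:Brakke_cylinders} is applicable, that the no-mass-drop / uniqueness hypothesis carries over unchanged, and that ``a multiplicity one plane tangent flow implies a regular point'' is precisely White's $\varepsilon$-regularity statement. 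I expect no genuine difficulty in any of these.
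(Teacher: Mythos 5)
Your proposal is correct and matches the paper's (implicit) argument: the paper treats the corollary as immediate from Theorem~\ref{t:Brakke_cylinders} and the definition of the stratification, exactly the content of your parenthetical alternative route, and then invokes~\eqref{White_hausdorffest}. Your main contradiction argument via a plane tangent flow and White's local regularity theorem is a slightly more verbose but equivalent way of saying the same thing.
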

In fact, this can be strengthened to a bound for the Minkowski dimension (see Theorem \ref{t:Brakke_regularity}).  Our primary applications of Theorem \ref{t:Brakke_cylinders} are to the $\varepsilon$-regularity of Theorem \ref{t:Brakke_epsregularity}, which itself is combined with the quantitative stratification to prove the regularity Theorem \ref{t:Brakke_regularity}.  To state this theorem we introduce the notion of the regularity scale:

\begin{definition}\label{d:regularity_scale}
For $X=(x,t)\in\cM$ we define the \emph{regularity scale} $r_{\cM}(X)$ as the supremum of $0\leq r\leq 1$ such that $M_{t'}\cap B_r(x)$ is a smooth graph for all $t-r^2< t'< t+r^2$ and such that
\begin{equation}
\sup_{X'\in\cM\cap B_r(X)} r\, \abs{A}\!(X')\leq 1\, ,
\end{equation}
where $A$ is the second fundamental form.  For $0<r<1$ we define the r-\emph{bad set}
\begin{equation}
\cB_r(\cM):=\{X=(x,t)\in\cM \, |\,  r_{\cM}(X)\leq r\}\, .
\end{equation}
\end{definition}

The relevance of this definition is that for points outside the bad set we get definite bounds on the geometry in a neighborhood of definite size. In particular, parabolic estimates give control of all the derivatives of the curvatures
\begin{equation}\label{interiorest}
\sup_{X'\in\cM\cap B_{r/2}(X)} r^{\ell+1}\, \abs{\nabla^\ell A}\!(X')\leq C_\ell\qquad \textrm{for}\quad X\in\cM\setminus \cB_r(\cM) \, .
\end{equation}

The following theorem shows that (a tubular neighborhood of) the bad set is small.
\begin{theorem}\label{t:Brakke_regularity}
Let $\cM$ be a Brakke flow starting at a $k$-convex smooth compact embedded hypersurface $M_0^n\subset \dR^{n+1}$. Then for every $\varepsilon>0$ there exists a constant $C=C(M_0,\varepsilon)<\infty$ such that we have the volume estimate
\begin{equation}
\Vol(T_r(\cB_r(\cM)))\leq C r^{n+4-k-\varepsilon}\qquad\qquad (0<r<1)\, ,
\end{equation}
for the $r$-tubular neighborhood of the bad set $B_r(\cM)$. In particular, the (parabolic) Minkowski dimension of the singular set $\cS(\cM)$ is at most $k-1$.
\end{theorem}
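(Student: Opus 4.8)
The plan is to combine the quantitative stratification estimate of Theorem~\ref{t:Brakke_quant_strat} with the $\varepsilon$-regularity Theorem~\ref{t:Brakke_epsregularity}, the bridge being a pointwise inclusion of the bad set into the quantitative singular stratum of critical codimension.

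\emph{Step 1 (Localization).} Since $M_0$ is a smooth compact embedded hypersurface, the Brakke flow $\cM$ is extinct after a time $T\leq C(\diam M_0)$ and its spacetime support lies in a fixed compact set $K\subset\dR^{n+1,1}$ depending only on $M_0$. Cover $K$ by finitely many parabolic balls $B_1(X_i)$, $i=1,\dots,I(M_0)$; on each concentric $B_2(X_i)$ Huisken's monotonicity formula bounds the Gaussian density, hence the mass, of $\cM$ by a constant $\Lambda=\Lambda(M_0)$, so after translation $\cM|_{B_2(X_i)}\in\cB^n_\Lambda(B_2(0^{N,1}))$ with $N=n+1$. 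It therefore suffices to bound $\Vol(T_r(\cB_r(\cM))\cap B_1(X_i))$ for each $i$ with constants depending only on $M_0$ and $\varepsilon$, and sum.

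\emph{Step 2 (Bad set $\subset$ quantitative stratum).} I claim there are $\eta=\eta(n,k)>0$ and $\gamma=\gamma(n,k)\in(0,1)$ with
\[
\cB_{\gamma r}(\cM)\cap B_1(X_i)\ \subset\ \cS^{k-1}_{\eta,r}(\cM)\qquad(0<r<1).
\]
By Theorem~\ref{t:Brakke_cylinders}, every $k$-selfsimilar limit flow of $\cM$ is a static multiplicity one plane, so $d_{\cB}$-closeness to \emph{some} $k$-selfsimilar flow is the same as closeness to a static plane. By the $\varepsilon$-regularity Theorem~\ref{t:Brakke_epsregularity}, there is $\eta>0$ so that $d_{\cB}(\cM_{X,s},\cN)\leq\eta$ for some $k$-selfsimilar $\cN$ forces $\cM$ to be a smooth graph with $s\,\abs{A}\leq 1$ on $B_{\gamma s}(X)$, i.e.\ $r_\cM(X)\geq\gamma s$ (Definition~\ref{d:regularity_scale}). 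Contrapositively, if $r_\cM(X)\leq\gamma r$ then for every $s\in[r,1]$ we have $\gamma s\geq\gamma r\geq r_\cM(X)$, and after shrinking $\gamma$ slightly this is strict, so $d_{\cB}(\cM_{X,s},\cN)>\eta$ for all $k$-selfsimilar $\cN$; by Definition~\ref{d:Brakke_singularstrata} this says $X\in\cS^{k-1}_{\eta,r}(\cM)$.

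\emph{Step 3 (Conclusion).} Given $0<\rho<1$, put $r=\rho/\gamma$. If $\rho<\gamma$ then $r\in(0,1)$, and since $\gamma r=\rho<r$ we get from Step~2 and monotonicity of tubular neighborhoods
\[
T_\rho(\cB_\rho(\cM))\cap B_1(X_i)\ \subset\ T_r\big(\cS^{k-1}_{\eta,r}(\cM)\big)\cap B_1(X_i).
\]
Applying Theorem~\ref{t:Brakke_quant_strat} with $j=k-1$, $N=n+1$, and using $(N+2)-(k-1)=n+4-k$,
\[
\Vol\big(T_\rho(\cB_\rho(\cM))\cap B_1(X_i)\big)\ \leq\ C(\varepsilon,\eta,\Lambda,n)\,r^{\,n+4-k-\varepsilon}\ =\ C\gamma^{-(n+4-k-\varepsilon)}\,\rho^{\,n+4-k-\varepsilon}.
\]
For $\rho\in[\gamma,1)$ the left side is $\leq\Vol(T_1(K))\leq C(M_0)\leq C(M_0)\gamma^{-(n+4-k-\varepsilon)}\rho^{\,n+4-k-\varepsilon}$ (the exponent being positive once $\varepsilon<n+4-k$, which we may assume). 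Summing over $i=1,\dots,I(M_0)$ yields $\Vol(T_r(\cB_r(\cM)))\leq C(M_0,\varepsilon)\,r^{\,n+4-k-\varepsilon}$ for all $0<r<1$. Finally, any singular point $X\in\cS(\cM)$ has $r_\cM(X)=0$, so $\cS(\cM)\subset\cB_r(\cM)$ for every $r$; hence $\Vol(T_r(\cS(\cM)))\leq\Vol(T_r(\cB_r(\cM)))\leq C r^{\,n+4-k-\varepsilon}$, and since $\Vol$ is the $(n+3)$-dimensional Hausdorff measure and $\varepsilon>0$ is arbitrary, the parabolic Minkowski dimension of $\cS(\cM)$ is at most $k-1$.

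\textbf{Main obstacle.} The real content is Step~2, and inside it the $\varepsilon$-regularity Theorem~\ref{t:Brakke_epsregularity}: upgrading a single-scale $d_{\cB}$-closeness to a static plane into an honest smooth graph with a curvature bound on a slightly smaller parabolic ball. This rests on Theorem~\ref{t:Brakke_cylinders} together with Brakke/White-type local regularity and a compactness--contradiction argument, and one must take care that multiplicity one is genuinely available (hence the no-mass-drop hypothesis of Theorem~\ref{t:Brakke_cylinders}) and that the two-sided time interval and the supremum over a ball in the definition of $r_\cM$ are actually reproduced by the conclusion. The scale bookkeeping (the loss $\gamma$, and the reduction to the localized flows $\cB^n_\Lambda(B_2(0^{N,1}))$ to which Theorem~\ref{t:Brakke_quant_strat} applies) is routine in comparison.
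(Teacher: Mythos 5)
Your proposal is correct and follows essentially the same route as the paper: localize to a compact spacetime region so Theorem~\ref{t:Brakke_quant_strat} applies, use the $\varepsilon$-regularity Theorem~\ref{t:Brakke_epsregularity} to deduce the pointwise inclusion of the bad set into the quantitative singular stratum $\cS^{k-1}_{\eta,\cdot}$, and then invoke the volume estimate. The only nitpick is that your $\eta$ and $\gamma$ inherit the dependence of the $\varepsilon$-regularity constant on $M_0$ (not merely on $n,k$), but this is harmless since the final constant depends on $M_0$ anyway.
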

\begin{remark}
Note that the exponent arises as $n+4-k=\dim(\dR^{n+1,1})-(k-1)$.
\end{remark}
\begin{remark}
For the analogous results proved in \cite{CheegerNaber_Ricci} for Einstein manifolds it was possible in some situations to remove the $\varepsilon$-dependence from the righthand side.  There is no analogous situation here.
\end{remark}

As a consequence, we obtain $L^p$-estimates for the inverse regularity scale, and thus in particular $L^p$-estimates for the second fundamental form and its derivatives.
\begin{corollary}\label{t:Brakke_estimates}
Let $\cM$ be a Brakke flow starting at a $k$-convex smooth compact embedded hypersurface $M_0^n\subset \dR^{n+1}$.  Then for every $0< p<n+1-k$ there exists a constant $C=C(M_0,p)<\infty$ such that
\begin{equation}\label{Brakke_reglpestimates}
\int r_{\cM}^{-p}dM_t \leq C\qquad \mathrm{and} \qquad  \int_0^\infty\int r_{\cM}^{-(p+2)}dM_t\, dt\leq C\, .
\end{equation}
In particular, we have $L^p$-estimates for the second fundamental form,
\begin{equation}\label{Brakke_lpestimates}
\int \abs{A}^{p}dM_t \leq C\qquad \mathrm{and} \qquad  \int_0^\infty\int \abs{A}^{p+2}dM_t\, dt\leq C\, ,
\end{equation}
and also $L^p$-estimates for the derivatives of the second fundamental form,
\begin{equation}
\int \abs{\nabla^\ell A}^{\tfrac{p}{\ell+1}}dM_t \leq C_\ell\qquad \mathrm{and} \qquad  \int_0^\infty\int \abs{\nabla^\ell A}^{\tfrac{p+2}{\ell+1}}dM_t \, dt\leq C_\ell\, ,
\end{equation}
for some constants $C_\ell=C_\ell(M_0,p)<\infty$ ($\ell=1, 2, \ldots$).
\end{corollary}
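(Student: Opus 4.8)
The plan is to derive all three families of estimates from the single volume bound of Theorem~\ref{t:Brakke_regularity} together with the interior estimates~(\ref{interiorest}), by a Vitali covering argument followed by a layer-cake integration; beyond Theorem~\ref{t:Brakke_regularity} no essentially new idea should be needed. The first step is to convert the spacetime Minkowski bound $\Vol(T_r(\cB_r(\cM)))\le C r^{n+4-k-\varepsilon}$ into the slicewise estimate
\begin{equation*}
\mathcal{H}^n(\cB_r(\cM)\cap M_t)\le C\,r^{\,n+1-k-\varepsilon},\qquad 0<r<1,
\end{equation*}
uniform in $t$. Fixing $t$, I would use the $5r$-covering lemma to select points $X_i=(x_i,t)\in\cB_r(\cM)\cap M_t$ with the spatial balls $B^N_{r/5}(x_i)$ pairwise disjoint and the $B^N_r(x_i)$ covering $\cB_r(\cM)\cap M_t$. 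Since $\cM$ starts at a smooth compact hypersurface, Huisken's monotonicity formula bounds the Gaussian densities and hence yields the uniform area-ratio bound $\mathcal{H}^n(M_t\cap B^N_\rho(x))\le C\rho^n$; this forces the number $K_r$ of balls to satisfy $K_r\ge c\,\mathcal{H}^n(\cB_r(\cM)\cap M_t)\,r^{-n}$. The parabolic balls $B_{r/5}(X_i)=B^N_{r/5}(x_i)\times(t-r^2/25,\,t+r^2/25)$ are then pairwise disjoint in $\dR^{N,1}$, each lies inside $T_r(\cB_r(\cM))$ and has $\Vol\ge c\,r^{N+2}$, so $C\,r^{n+4-k-\varepsilon}\ge \Vol(T_r(\cB_r(\cM)))\ge c\,K_r\,r^{N+2}\ge c'\,\mathcal{H}^n(\cB_r(\cM)\cap M_t)\,r^{N+2-n}$, which is the displayed estimate since $N+2-n=3$. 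Running the same argument with the spacetime measure $d\mu=dM_t\,dt$ in place of a single slice (so that $\mu(B_r(X_i)\cap\cM)\le C r^{n+2}$ after integrating the area bound over $|t'-t_i|<r^2$) yields $\mu(\cB_r(\cM))\le C\,r^{\,n+3-k-\varepsilon}$.

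Next I would apply the layer-cake formula. Because $r_{\cM}\le 1$, for each fixed $t$ one has
\begin{equation*}
\int r_{\cM}^{-p}\,dM_t\le \mathcal{H}^n(M_t)+p\int_0^1 r^{-1-p}\,\mathcal{H}^n(\cB_r(\cM)\cap M_t)\,dr,
\end{equation*}
and $\mathcal{H}^n(M_t)\le \mathcal{H}^n(M_0)<\infty$ since area is nonincreasing along the flow; inserting the slice estimate of the first step bounds the integrand by $C\,r^{\,n-k-p-\varepsilon}$, which is integrable on $(0,1)$ exactly when $p<n+1-k-\varepsilon$. Given $p<n+1-k$ one takes $\varepsilon=\tfrac12(n+1-k-p)>0$ and absorbs it into the constant, which gives the first bound in~(\ref{Brakke_reglpestimates}). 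The analogous computation with $\mu$ in place of $M_t$ (and with the finite total mass $\mu(\cM)=\int_0^\infty\mathcal{H}^n(M_t)\,dt$, finite because the flow is extinct after finite time) controls $\int_0^\infty\!\!\int r_{\cM}^{-(p+2)}\,dM_t\,dt$ by $C+C\int_0^1 r^{-3-p}\,\mu(\cB_r(\cM))\,dr$, whose integrand is again $\le C\,r^{\,n-k-p-\varepsilon}$ by the second estimate of the first step, integrable under the same condition; this gives the second bound in~(\ref{Brakke_reglpestimates}).

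Finally, the estimates on $A$ and its derivatives follow immediately: by the definition of the regularity scale, $|A|(X)\le r_{\cM}(X)^{-1}$ for every $X\in\cM$, and~(\ref{interiorest}) evaluated at the center of the ball gives $|\nabla^\ell A|(X)\le C_\ell\,r_{\cM}(X)^{-(\ell+1)}$; hence $|A|^p\le r_{\cM}^{-p}$ and $|\nabla^\ell A|^{p/(\ell+1)}\le C_\ell^{p/(\ell+1)}\,r_{\cM}^{-p}$ pointwise (and likewise with $p+2$ in place of $p$), so integrating against $dM_t$, respectively $dM_t\,dt$, and invoking~(\ref{Brakke_reglpestimates}) yields~(\ref{Brakke_lpestimates}) and the derivative bounds. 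The only genuinely delicate point, and the main obstacle apart from the already-proved Theorem~\ref{t:Brakke_regularity}, is the covering step: one must verify that the area-ratio bound is uniform in both $t$ and the center $x$, and that the powers of $r$ come out exactly as stated, the loss $N+2-n=3$ on a time slice versus $N-n=1$ on spacetime being precisely what converts $r^{\,n+4-k-\varepsilon}$ into the $r^{\,n+1-k-\varepsilon}$ and $r^{\,n+3-k-\varepsilon}$ used above. Everything else is the routine layer-cake estimate; the sharpness assertion is a separate matter, exhibited by explicit examples which I do not discuss here.
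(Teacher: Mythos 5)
Your proposal is correct and follows essentially the same route as the paper's (very terse) proof, which just says the corollary follows from the layer-cake formula, Theorem~\ref{t:Brakke_regularity}, the density bounds~(\ref{density_bounds}) and the interior estimates~(\ref{interiorest}). You have simply supplied the details: the Vitali/density-ratio covering argument that converts the spacetime Minkowski bound $\Vol(T_r(\cB_r(\cM)))\le Cr^{n+4-k-\varepsilon}$ into the slicewise bound $\cH^n(\cB_r(\cM)\cap M_t)\le Cr^{n+1-k-\varepsilon}$ (and its spacetime analogue), then layer-cake for the regularity scale, and finally the pointwise inequalities $|A|\le r_{\cM}^{-1}$ and $|\nabla^\ell A|\le C_\ell r_{\cM}^{-(\ell+1)}$ from the definition of the regularity scale and~(\ref{interiorest}).
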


\begin{remark}
Note that $n+1-k$ is the critical exponent for the shrinking cylinders $\dR^{k-1}\times S^{n+1-k}$\, .  That is, the mean curvature flow of $\dR^{k-1}\times S^{n+1-k}$ has local $L^p$ estimates on $|A|$ for all $p<n+1-k$, but not for $p>n+1-k$.  In particular, the above estimates are sharp.
\end{remark}

\subsection{Relationship with other works on the regularity of mean curvature flow}

To put things into a broader context, let us mention some other works (besides the results of White and our contribution) on the regularity and on estimates for the mean curvature flow.\\

Brakke's main regularity theorem \cite[Thm. 6.12]{Brakke_book} (see also \cite{KasaiTonegawa_regularity}), gives regularity almost everywhere, but relies on the unit density hypothesis. In codimension one, but still without any convexity assumption on the initial surface, Ilmanen \cite[Sect. 12]{Ilmanen_EllipticRegularization} (see also \cite{EvansSpruck_paper4}) proved a generic almost everywhere regularity theorem without unit density assumption. Other early fundamental contributions include the theory of the level-set flow by Evans-Spruck \cite{EvansSpruck_levelset} and Chen-Giga-Goto \cite{ChenGigaGoto_levelset}.\\

Using tools from the smooth world, the $k$-convex case (in particular for $k=2$ and $n$) has been extensively studied by Huisken-Sinestrari \cite{HuiskenSinestrari_MeanConvex,HuiskenSinestrari_Convexity,HuiskenSinestrari_Surgery}, Andrews \cite{Andrews_noncollapsing} and Sheng-Wang \cite{ShengWang_singularityprofile,Wang_convex}. This also provides alternatives for some arguments of White, at least up to the first singular time. Specializing to the $2$-convex case, there is the very interesting thesis of John Head \cite{Head_thesis}. In particular, by taking limits of the Huisken-Sinestrari surgery-solutions he obtains the same estimates (in the special case $k=2$) as we did in (\ref{Brakke_lpestimates}). Thus, there are two very long and sophisticated lines of reasoning for the $2$-convex case. One of them starts with Huisken's study of smooth solutions \cite{Huisken_convex}, moves all the way up to the surgery construction of Huisken-Sinestrari \cite{HuiskenSinestrari_Surgery} and concludes with the limiting argument of Head \cite{Head_thesis}. The other one starts with the weak Brakke solutions, goes through all the regularity theory of White and concludes with our quantitative stratification.  Both arguments give the same $L^{p}$-estimates, though we emphasize that the latter argument allows additionally for sharp $L^p$ estimates on $k$-convex flows for $k>2$, which were previously unknown, and in fact we prove the significantly stronger $L^p$ estimates on the regularity scale in (\ref{Brakke_reglpestimates}) for all $k$-convex flows.\\

Finally, there is a very interesting alternative approach by Klaus Ecker using integral estimates \cite{Ecker_firstsingtime}, where he investigates the size of the singular set at the first singular time in the $k$-convex case. This is related to our Corollaries \ref{t:Brakke_cylinderssize} and \ref{t:Brakke_estimates}. See also \cite{Ilmanen_singularities,HanSun_firstsing,XuYeZhao_firstsingu,LeSesum_firstsing} for further related results.

\section{Preliminaries}\label{prelim}

In this section we collection some standard references and results which will be needed in subsequent sections.

\subsection{Brakke flow}

Let $\cM=\{(M_t,t) : t\in I\}$ be a Brakke flow of $n$-dimensional integral varifolds in an open subset $U\subset\dR^N$ \cite{Brakke_book,Ilmanen_EllipticRegularization}. This means that $M_t$ is a one-parameter family of Radon measures on $U$ such that for almost all times $M_t$ is associated to an $n$-dimensional integer muliplicity rectifiable varifold and that
\begin{equation}\label{brakkeineq}
\overline{D}_t\int \varphi dM_t\leq \int\left(-\varphi H^2+\nabla \varphi\cdot H\right) dM_t,
\end{equation}
for all nonnegative test functions $\varphi\in C_c^1(U,\dR^+)$. Here, $\overline{D}_t$ denotes the limsup of difference quotients and $H$ denotes the mean curvature vector, which is defined via the first variation formula and exists almost everywhere at almost all times. The righthand side is interpreted as $-\infty$ whenever the integral does not exist. We assume that the mass is bounded,
\begin{equation}
M_t(U)\leq \Lambda <\infty\qquad (t\in I).
\end{equation}
This can always be achieved by passing to smaller sets and time-intervals (using that Radon-measures are finite on compact sets and a local version of the fact that the mass is decreasing under mean curvature flow). After rescaling and shifting time and space, we can assume without essential loss of generality that $U=B_2(0^N)$ and $I=(-4,4)$, i.e. that $\cM$ is defined on $B_2(0^{N,1})=B_2(0^N)\times(-4,4)$. To keep track of everything said so far we write $\cM\in\cB^n_\Lambda(B_2(0^{N,1}))$. We sometimes, when there is no danger of confusion, also denote by $M_t$ and $\cM$ the support of the measure $M_t$ and of the Brakke flow $\cM$ respectively.

\subsection{The mean-convex case}\label{ss:meanconvex}
If $M_0^n\subset\dR^{n+1}$ is a mean-convex smooth compact embedded hypersurface, then there is an essentially unique Brakke flow $\cM=\{(M_t,t)|t\geq 0\}$ starting at $M_0$.\footnote{It is unique up to sudden disappearing of connected components (mass-drop) \cite{Ilmanen_EllipticRegularization,Soner_uniqueness}. Since sudden disappearing only helps in our proofs, we exclude it from now on.} In fact, equality holds in (\ref{brakkeineq}), see Metzger-Schulze \cite{MetzgerSchulze_nomassdrop}, and the flow can also be described as the level set flow of Chen-Giga-Goto and Evans-Spruck \cite{ChenGigaGoto_levelset,EvansSpruck_levelset}.

\subsection{Localized monotonicity formula}
Let $\cM\in\cB^n_\Lambda(B_2(0^{N,1}))$ and $X_0=(x_0,t_0)\in B_1(0^{N,1})$. Let
\begin{equation}
\phi_{X_0}(x,t)=\frac{1}{(4\pi(t_0-t))^{n/2}}e^{-\abs{x-x_0}^2/4(t_0-t)}
\end{equation}
be the backwards heat kernel based at $X_0$ and consider the cutoff function
\begin{equation}
\rho_{X_0}(x,t)=\left(1-\abs{x-x_0}^2-2n(t-t_0)\right)_+^3.
\end{equation}
Then we have the following localized version of Huisken's monotonicity formula \cite{Huisken_monotonicity}:
\begin{equation}\label{Brakke_monotonicity}
\overline{D}_t\int \phi_{X_0}\rho_{X_0} dM_t\leq - \int\abs{H-\frac{(x-x_0)^\perp}{2(t-t_0)}}^2 \phi_{X_0}\rho_{X_0} dM_t\qquad (t<t_0)\, .
\end{equation}
This is proved for smooth hypersurfaces in \cite[Prop. 4.17]{Ecker_book}, but as indicated there the proof can be generalized for Brakke flows in arbitrary codimension (see also \cite[Lemma 7]{Ilmanen_singularities}). The monotonicity formula (\ref{Brakke_monotonicity}) has many very useful consequences: First, it implies bounds for the density ratios. Concretely, if $X=(x,t)\in B_1(0^{N,1})$ and $0<r\leq1/2$, then
\begin{equation}\label{density_bounds}
\frac{M_t(B_{r}(x))}{r^n}\leq 4\Lambda\, .
\end{equation}
Then, using the compactness theorem for Brakke flows \cite[Thm 7.1]{Ilmanen_EllipticRegularization}, blowup sequences $\cM^\alpha=\cM_{X,r_\alpha}$ always have a subsequential limit $\cN$, called a tangent flow, and again utilizing the monotonicity formula it follows that $\cD_{\lambda}\cN^-=\cN^-$ (see also \cite[Lemma 8]{Ilmanen_singularities}). Finally, using the notation
\begin{equation}
\Theta(\cM,X_0,\tau)=\int \phi_{X_0}\rho_{X_0} dM_{t_0-\tau}\, ,
\end{equation}
we define the Gaussian density at $X_0$ by the formula
\begin{equation}
\Theta(\cM,X_0)=\lim_{\tau\to 0}\Theta(\cM,X_0,\tau)\, .
\end{equation}
Note by monotonicity that this limit is well defined.  The Gaussian density is the important quantity in the local regularity theorem \cite{Brakke_book,White_regularitytheorem,KasaiTonegawa_regularity}.

\subsection{Distance between Brakke flows}\label{ss:dist_Brakke}
After restricting to the unit ball, the rescaled flows $\cM^\alpha=\cM_{X,r_\alpha}\, (r_\alpha\leq 1/2)$ are elements of the space $\cB^n_{4\Lambda}(B_{1}(0^{N,1}))$. Let us define a pseudometric $d_{\cB}$ as follows: Pick a countable dense subset of functions $\phi_\alpha\in C_c(B_1(0^N))$ and a countable dense subset of times $t_\beta\in(-1,1)$, and define
\begin{equation}
d_{\cB}(\cM,\cN):=\sum_{\alpha,\beta}\frac{1}{2^{\alpha+\beta}}\frac{\abs{\int \phi_\alpha\, dM_{t_\beta}-\int \phi_\alpha\, dN_{t_\beta}}}{1+\abs{\int \phi_\alpha\, dM_{t_\beta}-\int \phi_\alpha\, dN_{t_\beta}}}\, .
\end{equation}
Note that one could just as well take a countable collection of spacetime functions in the definition, though for Brakke flows it makes little difference.  The key facts about $d_{\cB}$ are that it is on the one hand weak enough to be compatible with the convergence of Brakke flows and on the other hand strong enough to distinguish between different $0$-selfsimilar solutions. To illustrate this, let us prove (\ref{relation}):
\begin{proof}[Proof of (\ref{relation})]
If $X\notin \cS^j(\cM)$, then there is a $(j+1)$-selfsimilar tangent flow, $\cM_{X,r_\alpha}\to \cN$. Since $d_{\cB}$ is weak enough this implies $d_{\cB}(\cM_{X,r_\alpha},\cN)\to 0$ and thus $X\notin \bigcup_\eta \,\,\bigcap_r \, \cS^j_{\eta,r}(\cM)$. Conversely, if $X\notin \bigcup_\eta \,\,\bigcap_r \, \cS^j_{\eta,r}(\cM)$, then we can find a sequence of $(j+1)$-selfsimilar flows $\cN^\alpha$ and scales $s_\alpha\to 0$ such that $d_{\cB}(\cM_{X,s_\alpha},\cN^\alpha)\to 0$.  Note that since we have uniform local mass bounds for $\cM_{X,s_\alpha}$ from (\ref{density_bounds}), we also have local mass bounds for $\cN^\alpha$ because of a local version of the fact that mass is decreasing in time under Brakke flow.  Therefore, after passing to subsequences we can find a $0$-selfsimilar limit $\cM$ and a $(j+1)$-selfsimilar limit $\cN$. Then $d_{\cB}(\cM,\cN)=0$, and since $d_{\cB}$ is strong enough to distinguish between $0$-selfsimilar solutions, this implies that $\cM=\cN$. Thus $\cM$ is $(j+1)$-selfsimilar. Now for $\delta_\alpha\to 0$ slowly enough, $\cM_{X,\delta_\alpha s_\alpha}$ converges to a $(j+1)$-selfimilar flow on whole $\dR^{N,1}$, and we conclude that $X\notin \cS^j(\cM)$.
\end{proof}
\section{Volume estimates for quantitative strata}\label{s:current_decomposition}

In this section, we prove Theorem \ref{t:Brakke_quant_strat}. In outline, we follow the scheme introduced in \cite{CheegerNaber_Ricci,CheegerNaber_HarmonicMinimal}. We first prove a quantitative rigidity lemma and decompose $\cM\cap B_1(0)$ into a union of sets, according to the behavior of points at different scales. By virtue of a quantitative differentiation argument, we show that the number of sets in this decomposition grows at most polynomially. We then establish a cone-splitting lemma for Brakke flows and prove, roughly speaking, that at their good scales points in  $\cS^j_{\eta,r}(\cM)$ line up along at most $j$-dimensional subspaces. Using all this, we conclude the argument by constructing a suitable covering of $\cS^j_{\eta,r}(\cM)\cap B_1(0)$ and computing its volume.

The possibility of quasistatic tangent flows causes some additional difficulties. In fact, the quasistatic case was the reason why White proved a general stratification theorem in \cite{White_stratification}, somewhat different in spirit to the previous ones in the literature. To resolve the quasistatic issue in our case, we essentially show that it suffices to cover a neighborhood of the final time-slice (see Lemma \ref{l:Brakke_quasistatic} and Section \ref{brakke_conclusion}). This also gives an alternative proof of White's result.

\subsection{Energy decomposition}\label{ss:energy_decomposition}

The goal of this subsection is to decompose $\cM\cap B_1(0)$ into a union of sets $E_{T^\beta}$, according to the behavior of points at different scales. As in \cite{CheegerNaber_Ricci,CheegerNaber_HarmonicMinimal} it will be of crucial importance that we can deal separately with each individual set $E_{T^\beta}$, all of whose points have the same $\beta$-tuple of good and bad scales.

\begin{definition}
\label{d:current_almost_conical}
A Brakke flow $\cM\in\cB^n_{\Lambda}(B_{2r}(X))$ is $(\varepsilon,r,j)$-selfsimilar at $X=(x,t)$ if there exists a $j$-selfsimilar flow $\cN$ such that
$$
d_{\cB}(\cM_{X,r},\cN)<\varepsilon \, .
$$
If $\cN$ is a shrinker with respect to a plane $V^j$, we put $W_X=(x+V)\times\{t\}$. If $\cN$ is quasistatic with respect to $V^j$ and disappears at time $T$, we put $W_X=(x+V)\times (-\infty,t+r^2T]$. If $\cN$ is static with respect to $V^{j-2}$, we put $W_X=(x+V)\times\dR$.  We say that $\cM$ is {\it $(\varepsilon,r,j)$-selfsimilar} at $X$ with respect to $W_X$.
\end{definition}

\begin{lemma}[Quantitative Rigidity]
\label{t:current_monotone_rigidity}
For all $\varepsilon >0$, $\Lambda<\infty$ and $n<N$ there exists $\delta=\delta(\varepsilon,\Lambda,n,N)>0$, such that if $\cM\in\cB^n_\Lambda(B_2(0^{N,1}))$
satisfies
\begin{align}\label{e:current_almostrigid}
\Theta(\cM,X,r^2)-\Theta(\cM,X,(\delta r)^2)\leq \delta \quad \textrm{for some} \quad X\in B_1(0^{N,1}),\, 0<r<1/2\, ,
\end{align}
then $\cM$ is $(\varepsilon,r,0)$-selfsimilar at $X$.
\end{lemma}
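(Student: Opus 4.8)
The plan is to argue by contradiction and compactness, using Huisken's monotonicity formula (\ref{Brakke_monotonicity}) together with the compactness theorem for Brakke flows. Suppose the statement fails for some fixed $\varepsilon_0>0$, $\Lambda<\infty$ and $n<N$. Then there is a sequence $\delta_i\to 0$, a sequence of Brakke flows $\cM^i\in\cB^n_\Lambda(B_2(0^{N,1}))$, points $X_i\in B_1(0^{N,1})$ and radii $0<r_i<1/2$ such that
\begin{equation*}
\Theta(\cM^i,X_i,r_i^2)-\Theta(\cM^i,X_i,(\delta_i r_i)^2)\leq \delta_i\, ,
\end{equation*}
but $\cM^i$ is \emph{not} $(\varepsilon_0,r_i,0)$-selfsimilar at $X_i$, i.e. $d_{\cB}(\cM^i_{X_i,r_i},\cN)\geq \varepsilon_0$ for every $0$-selfsimilar $\cN$. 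The first step is to normalize: replace $\cM^i$ by the rescaled flow $\cM^i_{X_i,r_i}$, so that after this normalization $X_i=0^{N,1}$, $r_i=1$, the density-gap hypothesis becomes $\Theta(\cM^i,0,1)-\Theta(\cM^i,0,\delta_i^2)\leq\delta_i$, and $d_{\cB}(\cM^i,\cN)\geq\varepsilon_0$ for all $0$-selfsimilar $\cN$. By the density ratio bounds (\ref{density_bounds}) these rescaled flows lie in $\cB^n_{4\Lambda}$ of the relevant ball, so by the Brakke compactness theorem \cite[Thm 7.1]{Ilmanen_EllipticRegularization} a subsequence converges to a limit Brakke flow $\cN_\infty$.

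The heart of the argument is to show $\cN_\infty$ is $0$-selfsimilar, i.e. its backwards portion is selfsimilar with respect to the spacetime origin. For this I would run the standard monotonicity argument at the level of the limit. By upper semicontinuity of the Gaussian density under Brakke convergence and the hypothesis, for each fixed $0<\sigma<\tau<1$ we get $\Theta(\cN_\infty,0,\tau)-\Theta(\cN_\infty,0,\sigma)\leq \liminf_i\big(\Theta(\cM^i,0,\tau)-\Theta(\cM^i,0,\sigma)\big)\leq \liminf_i \delta_i = 0$; since $\Theta(\cN_\infty,0,\cdot)$ is monotone by (\ref{Brakke_monotonicity}), it is therefore \emph{constant} on $(0,1)$. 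Integrating the localized monotonicity inequality (\ref{Brakke_monotonicity}) for $\cN_\infty$ over a compact time interval and using that the left-hand side has zero total variation forces the error term to vanish: $H=\frac{(x)^\perp}{2t}$ for $N_t$-a.e.\ $x$, at a.e.\ $t<0$. This is exactly the equation of a selfsimilarly shrinking Brakke flow centered at the origin, so $\cD_\lambda\cN_\infty^-=\cN_\infty^-$ for all $\lambda>0$, i.e.\ $\cN_\infty$ is $0$-selfsimilar. (One subtlety: the cutoff $\rho_{X_0}$ localizes the monotonicity to $B_1$, but since we only need selfsimilarity to pass to the distance $d_{\cB}$, which is defined using test functions supported in $B_1(0^N)$, this is enough; alternatively one rescales slightly to get selfsimilarity on any fixed ball.)

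Finally, convergence $\cM^i\to\cN_\infty$ in the sense of Brakke flows implies $d_{\cB}(\cM^i,\cN_\infty)\to 0$ — this is precisely the statement that $d_{\cB}$ is "weak enough," established in Section \ref{ss:dist_Brakke}. Since $\cN_\infty$ is $0$-selfsimilar, this contradicts $d_{\cB}(\cM^i,\cN_\infty)\geq\varepsilon_0$ for all large $i$. This contradiction proves the lemma.

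The step I expect to be the main obstacle is the middle one: rigorously extracting from "the Gaussian density of the limit is constant in $\tau$" the conclusion that the backwards portion of $\cN_\infty$ is genuinely selfsimilar, at the level of Brakke flows rather than smooth flows. One must be careful that the localized monotonicity formula (\ref{Brakke_monotonicity}) carries an extra term coming from the cutoff $\rho_{X_0}$, so the vanishing-variation argument only directly controls behavior well inside $B_1$; handling the interplay between the cutoff, the limsup difference quotient $\overline{D}_t$, and the a.e.-in-time rectifiability of Brakke flows is the technical crux. The remaining ingredients — rescaling, compactness, upper semicontinuity of density, and the "weakness" of $d_{\cB}$ — are all either standard or already recorded in the excerpt.
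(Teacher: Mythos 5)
Your proposal is correct and takes essentially the same approach as the paper: contradiction, rescaling by $\cM^i_{X_i,r_i}$, Brakke compactness with the density bounds, vanishing of the Gaussian density gap via Huisken monotonicity to show the subsequential limit is $0$-selfsimilar, and then contradicting $d_{\cB}\geq\varepsilon_0$. The paper's proof is terser; the technical points you flag (cutoff, weakness of $d_{\cB}$) are real but standard and are handled implicitly by the references cited there.
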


\begin{proof}
If not, then there exist $\cM^\alpha\in\cB^n_\Lambda(B_2(0^{N,1}))$, $X_\alpha\in B_1(0^{N,1})$ and $0<r_\alpha<1/2$ with
\begin{equation}\label{contr_assumpt}
\Theta(\cM^\alpha,X_\alpha,r_\alpha^2)-\Theta(\cM^\alpha,X_\alpha,(r_\alpha/\alpha)^2)\leq 1/\alpha\, ,
\end{equation}
but such that $\cM^\alpha:=\cM^\alpha_{X_\alpha,r_\alpha}$ satisfies $d_{\cB}(\cM^\alpha,\cN)\geq \varepsilon$ for all $0$-selfsimilar $\cN$. However, it follows from (\ref{Brakke_monotonicity}), (\ref{density_bounds}), (\ref{contr_assumpt}) and the compactness theorem for Brakke flows that, after passing to a subsequence, $\cM^\alpha\to\cN$ for some $0$-selfsimilar $\cN$. In particular, $M^\alpha_t\to N_t$ in the sense of Radon measures, for all $t\in (-1,1)$. But this implies $d_{\cB}(\cM^\alpha,\cN)< \varepsilon$ for $\alpha$ large enough, a contradiction.
\end{proof}

Now, for $X\in\cM\cap B_1(0)$ and $1/2 > r_1>r_2$, we define the \emph{$(r_1,r_2)$-Huisken energy} by
\begin{equation}
\cW_{r_1,r_2}(\cM,X):=\Theta(\cM,X,r_1^2)-\Theta(\cM,X,r_2^2)\geq 0.
\end{equation}
Given constants $0<\gamma<1/2$ and $\delta>0$ and an integer $q<\infty$ (these parameters will be fixed suitably in Section \ref{brakke_conclusion}), let $N$ be the number of $\alpha>q$ such that
$$
\cW_{\gamma^{\alpha-q},\gamma^{\alpha+q}}(\cM,X)>\delta\, .
$$
Clearly, the sum of the changes of $\Theta$ is bounded by $\Theta(\cM,X,1/4)\leq\Lambda/\pi^{n/2}$. Thus
\begin{equation}
\label{e:Nb}
N\leq (2q+1)\delta^{-1}\Lambda/\pi^{n/2}\, .
\end{equation}
Otherwise, there would be at least $\delta^{-1}\Lambda/\pi^{n/2}$ disjoint intervalls of the form $(\gamma^{\alpha-q},\gamma^{\alpha+q})$ with $\cW_{\gamma^{\alpha-q},\gamma^{\alpha+q}}(\cM,X)>\delta$. This is an instance of {\it quantitative differentiation}.

 For each point $X\in\cM\cap B_1(0)$, to keep track of its behavior at different scales, we define a $\{0,1\}$-valued sequence $(T_\alpha(X))_{\alpha\geq 1}$ as follows. By definition, $T_\alpha(X)=1$ if $\alpha\leq q$ or $\cW_{\gamma^{\alpha-q},\gamma^{\alpha+q}}(\cM,X)>\delta$, and $T_\alpha(X)=0$ if $\alpha>q$ and $\cW_{\gamma^{\alpha-q},\gamma^{\alpha+q}}(\cM,X)\leq\delta$. Then, for each $\beta$-tuple $(T_\alpha^\beta)_{1\leq \alpha\leq \beta}$, we  put
\begin{equation}
\label{Edef}
E_{T^\beta}(\cM)=\{X\in \cM\cap B_1(0)\, |\, T_\alpha(X)=T_\alpha^\beta \textrm{ for } 1\leq \alpha\leq \beta \}\, .
\end{equation}
A priori there are $2^\beta$ possible sets $E_{T^\beta}(\cM)$. However, by the above, $E_{T^\beta}(\cM)$ is empty whenever $T^\beta$ has more than
$$
Q:=(2q+1)\delta^{-1}\Lambda/\pi^{n/2}+q
$$
nonzero entries. Thus, we have constructed a decomposition of $\cM\cap B_1(0)$ into at most $\beta^Q$ (for $\beta\geq 2$) nonempty sets $E_{T^\beta}(\cM)$.

\subsection{Cone-splitting for Brakke flows}

The goal of this subsection is to prove Corollary \ref{c:Brakke_inductive} which says, roughly speaking, that at their good scales points line up in a tubular neighborhood of a well defined {\it almost planar} set. Here, the set of points that we call $\delta$-good at scales between $Ar$ and $r/A$ ($A>1$) is defined as
\begin{equation}
L_{Ar,r/A,\delta}(\cM)=\{X\in\cM\cap B_1(0): \cW_{Ar,A^{-1}r}(\cM,X)\leq\delta\} \, .
\end{equation}

A key role is played by the cone-splitting principle for Brakke flows and its quantitative version (Lemma \ref{l:Brakke_cone_splitting}). Similar ideas played a key role in \cite{CheegerNaber_Ricci,CheegerNaber_HarmonicMinimal}, and we recall here the cone-splitting principle for varifolds for the readers convenience:

\vskip2mm
\noindent
{\bf Cone-splitting principle for varifolds.}
Let $|I|$ denote a $n$-varifold on $\dR^N$ which is $j$-conical with respect to the
$j$-plane $V^j$.  Assume in addition that for some $y\not\in V^j$ that $|I|$ is also
$0$-conical with respect to $y$.  Then it follows that $|I|$ is $(j+1)$-conical
with respect to the $(j+1)$-plane ${\rm span}\{y,V^j\}$.
\vskip1mm

For Brakke flows there can be shrinkers, static and quasistatic tangent flows, and thus we have to distinguish a couple of cases:

\vskip2mm
\noindent
{\bf Cone-splitting principle for Brakke flows.}
Assume that $\cM$ is $j$-selfsimilar at $0^{N,1}$ with respect to $W$ and $0$-selfsimilar at $Y=(y,s)\notin W$. Then we have the following implications:
\begin{itemize}
\item If $W=V^j\times\{0\}$ and
\begin{itemize}
\item if $s=0$, then $y\notin V$ and $\cM$ is $(j+1)$-selfsimilar at $0^{N,1}$ with respect to ${\rm span}\{y,V^j\}\times\{0\}$.
\item if $s\neq 0$ and $y\in V$, then $\cM$ is $j$-selfsimilar at $0^{N,1}$ and quasistatic with respect to $V^j\times(-\infty, \max\{s,0\}]$.
\item if $s\neq 0$ and $y\notin V$, then $\cM$ is $(j+1)$-selfsimilar and quasistatic with respect to ${\rm span}\{y,V^j\}\times(-\infty, \max\{s,0\}]$.
\end{itemize}
\end{itemize}
\begin{itemize}
\item If $W=V^j\times(-\infty,T]$ and
\begin{itemize}
\item if $y\in V$, then $s>T$ and $\cM$ is $j$-selfsimilar at $0^{N,1}$ and quasistatic with respect to $V^j\times (-\infty,s]$.
\item  $y\notin V$, then $\cM$ is $(j+1)$-selfsimilar and quasistatic with respect to ${\rm span}\{y,V^j\}\times(-\infty, \max\{s,T\}]$.
\end{itemize}
\end{itemize}
\begin{itemize}
\item If $W=V^{j-2}\times\dR$, then $y\notin V$ and $\cM$ is $(j+1)$-selfsimilar and static with respect to ${\rm span}\{y,V^{j-2}\}\times\dR$.
\end{itemize}

From the above and an argument by contradiction, we immediately obtain the following quantitative refinement.

\begin{lemma}[Cone-splitting Lemma]
\label{l:Brakke_cone_splitting}
For all $\varepsilon,\rho>0$, $R,\Lambda<\infty$ and $n<N$ there exists a constant
$\delta=\delta(\varepsilon,\rho,R,\Lambda,n,N)>0$ with the following property. If $\cM \in \cB^n_\Lambda(B_{2R}(0^{N,1}))$ satisfies
\begin{enumerate}
 \item $\cM$ is $(\delta,R,j)$-selfsimilar at $0^{N,1}$ with respect to $W$.
 \item There exists $Y=(y,s)\in B_1(0^{N,1})\setminus T_{\rho}(W)$ such that $\cM$ is
$(\delta,2,0)$-selfsimilar at $Y$,
\end{enumerate}
then we have the following implications:
\begin{itemize}
\item If $W=V^j\times\{0\}$ and
\begin{itemize}
\item if $\abs{s}<\rho^2$, then $d(y,V)\geq\rho$ and $\cM$ is $(\varepsilon,1,j+1)$-selfsimilar at $0^{N,1}$ with respect to ${\rm span}\{y,V^j\}\times\{0\}$.
\item if $\abs{s}\geq\rho^2$ and $d(y,V)<\rho$, then $\cM$ is $(\varepsilon,1,j)$-selfsimilar at $0^{N,1}$ with respect to $V^j\times(-\infty, \max\{s,0\}]$.
\item if $\abs{s}\geq\rho^2$ and $d(y,V)\geq\rho$, then $\cM$ is $(\varepsilon,1,j+1)$-selfsimilar at $0^{N,1}$ with respect to ${\rm span}\{y,V^j\}\times(-\infty, \max\{s,0\}]$.
\end{itemize}
\end{itemize}
\begin{itemize}
\item If $W=V^j\times(-\infty,T]$ and
\begin{itemize}
\item if $d(y,V)<\rho$, then $s\geq T+\rho^2$ and $\cM$ is $(\varepsilon,1,j)$-selfsimilar at $0^{N,1}$ with respect to $V^j\times (-\infty,s]$.
\item if $d(y,V)\geq\rho$, then $\cM$ is $(\varepsilon,1,j+1)$-selfsimilar at $0^{N,1}$ with respect to ${\rm span}\{y,V^j\}\times(-\infty, \max\{s,T\}]$.
\end{itemize}
\end{itemize}
\begin{itemize}
\item If $W=V^{j-2}\times\dR$, then $d(y,V)\geq \rho$ and $\cM$ is $(\varepsilon,1,j+1)$-selfsimilar with respect to ${\rm span}\{y,V^{j-2}\}\times\dR$.
\end{itemize}
\end{lemma}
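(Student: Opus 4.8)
The plan is the usual compactness-and-contradiction reduction to the (qualitative) cone-splitting principle for Brakke flows stated above, following the scheme of the proof of (\ref{relation}). I would first dispose of the assertions that do not mention $\varepsilon$ — the separations ``$d(y,V)\geq\rho$'', ``$s>T$'', ``$s\geq T+\rho^2$''. These are purely metric: from (\ref{d:distance}) one computes $d(Y,W)=\max\{d(y,V),\abs{s}^{1/2}\}$ when $W=V^j\times\{0\}$, and $d(Y,W)=\max\{d(y,V),(s-T)_+^{1/2}\}$ when $W=V^j\times(-\infty,T]$ (and analogously in the static case), so the hypothesis $Y\notin T_\rho(W)$ forces the stated separations for any choice of $\delta$.

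For the $(\varepsilon,1,\cdot)$-selfsimilarity conclusions I argue by contradiction. Fixing $\varepsilon,\rho,R,\Lambda,n,N$ and supposing no $\delta$ works, one obtains $\delta_\alpha\to 0$, flows $\cM^\alpha\in\cB^n_\Lambda(B_{2R}(0^{N,1}))$, points $Y_\alpha=(y_\alpha,s_\alpha)\in B_1(0^{N,1})\setminus T_\rho(W^\alpha)$, $j$-selfsimilar flows $\cN^\alpha$ with distinguished plane $V^\alpha$ (and, in the quasistatic case, disappearing time $T^\alpha$), and $0$-selfsimilar flows $\cH^\alpha$, with $d_{\cB}(\cM^\alpha_{0,R},\cN^\alpha)<\delta_\alpha$ and $d_{\cB}(\cM^\alpha_{Y_\alpha,2},\cH^\alpha)<\delta_\alpha$, but such that $\cM^\alpha_{0,1}$ is not $\varepsilon$-close in $d_{\cB}$ to any selfsimilar flow of the type named in the conclusion. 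By (\ref{density_bounds}) and the compactness theorem for Brakke flows, after passing to a subsequence $\cM^\alpha\to\cM^\infty$, $\cN^\alpha\to\cN^\infty$, $\cH^\alpha\to\cH^\infty$, $V^\alpha\to V$ in the Grassmannian, $Y_\alpha\to Y_\infty=(y_\infty,s_\infty)\in\overline{B_1(0^{N,1})}$, and $T^\alpha\to T^\infty\in[0,\infty]$ when present (if $T^\infty=\infty$ the limit is static and the pertinent conclusion of the lemma is then the static one, again supplied by the cone-splitting principle; otherwise $T^\infty$ is finite, being bounded by the time window that a valid $Y_\alpha$ forces). Since self-similarity of the backwards portion, splitting off a given plane, and stationarity of the link are all closed under Brakke convergence, $\cN^\infty$ is still $j$-selfsimilar and $\cH^\infty$ still $0$-selfsimilar; and exactly as in the proof of (\ref{relation}) — $d_{\cB}$ is weak enough to be compatible with the convergence and strong enough to separate $0$-selfsimilar flows — we obtain that on $B_1(0^{N,1})$, $\cM^\infty$ agrees with $\cN^\infty$ and $\cM^\infty_{Y_\infty,2}$ agrees with $\cH^\infty$. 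Hence $\cM^\infty$ is $j$-selfsimilar at $0^{N,1}$ with respect to $W^\infty$ (the set built from $V$ and $T^\infty$) and $0$-selfsimilar at $Y_\infty$, and the separation passes to the limit as $d(Y_\infty,W^\infty)\geq\rho$, in particular $Y_\infty\notin W^\infty$.

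Now the cone-splitting principle for Brakke flows applies to $\cM^\infty$ and, in each of its cases, produces the conclusion that $\cM^\infty_{0,1}$ is $(j+1)$-selfsimilar, resp.\ $j$-selfsimilar and quasistatic, with spine ${\rm span}\{y_\infty,V\}\times\{0\}$, or $V\times(-\infty,\max\{s_\infty,0\}]$, or the static analogue — i.e.\ the limiting versions of exactly the sets named in the lemma (here $y_\infty\notin V$ in every span-forming case because $d(Y_\infty,W^\infty)\geq\rho$). To conclude, pick rotations $O^\alpha$ converging to the identity with $O^\alpha(V)=V^\alpha$ and, in the quasistatic cases, time-translations replacing $\max\{s_\infty,0\}$ (resp.\ $\max\{s_\infty,T^\infty\}$) by the value with $\alpha$-subscripts; these operations are continuous in $d_{\cB}$, since $d_{\cB}$ metrizes weak convergence of the time-slices, so they carry $\cM^\infty_{0,1}$ to a flow $\widetilde{\cN}^\alpha$ of the type asserted, with exactly the prescribed spine, and with $d_{\cB}(\widetilde{\cN}^\alpha,\cM^\infty_{0,1})\to 0$. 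The triangle inequality then gives $d_{\cB}(\cM^\alpha_{0,1},\widetilde{\cN}^\alpha)<\varepsilon$ for large $\alpha$, contradicting the choice of $\cM^\alpha$.

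I expect the real obstacle to be not the compactness — routine, given the cited results and the template of (\ref{relation}) — but the bookkeeping needed to match the quantitative subcases, split by whether $\abs{s}<\rho^2$ and whether $d(y,V)<\rho$, with the cases of the cone-splitting principle, split by whether $s=0$ and whether $y\in V$. A counterexample sequence may limit onto a boundary configuration (for instance $0<d(y_\infty,V)\leq\rho$ while $d(y_\alpha,V^\alpha)<\rho$, or $s_\infty\neq 0$ while $\abs{s_\alpha}<\rho^2$, or a quasistatic hypothesis whose limit is static), and in each such case one must verify that the conclusion the cone-splitting principle actually supplies for $\cM^\infty$ — which may be of a different type or dimension than the one literally written — implies the asserted one, e.g.\ by restricting a $(j+1)$-dimensional spine to a $j$-dimensional subplane (keeping the disappearing time) and interpreting ``with respect to $W_X$'' at the level of the $\varepsilon$-tolerance. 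Reconciling these shrinker/quasistatic/static distinctions through the limiting process, and tracking the shape of $W$ consistently, is where the care is needed.
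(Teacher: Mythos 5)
Your proposal is correct and takes the same approach as the paper, which dispatches this lemma in one sentence (``From the above and an argument by contradiction, we immediately obtain the following quantitative refinement.'') — precisely the compactness-and-contradiction reduction to the qualitative cone-splitting principle that you carry out. The boundary-configuration concerns you flag at the end are real but resolvable along the lines you indicate: a flow that is $(j+1)$-selfsimilar with respect to a spine is in particular $j$-selfsimilar with respect to any $j$-dimensional subplane of that spine (with the appropriate $W_X$), and a quasistatic flow with disappearing time exceeding $1$ is indistinguishable from a static one on $B_1(0^{N,1})$, so the cone-splitting principle always supplies a conclusion at least as strong as the one the contradiction hypothesis denies.
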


Using also Lemma \ref{t:current_monotone_rigidity}, by induction/contradiction we now obtain:

\begin{corollary}[Line-up in tubular neighborhoods]
\label{c:Brakke_inductive}
For all $\mu,\nu>0$, $\Lambda<\infty$ and $n<N$ there exist
$\delta=\delta(\mu,\nu,\Lambda,n,N)>0$ and $A=A(\mu,\nu,\Lambda,n,N)<\infty$ such that the following holds:  If $\cM \in \cB^n_\Lambda(B_2(0^{N,1}))$ and $X\in L_{Ar,r/A,\delta}(\cM)$ for some $r\leq 1/A$, then there exists $0\leq \ell\leq n+2$ and $W_X^\ell$ such that
\begin{enumerate}
\item $\cM$ is $(\mu,r,\ell)$-selfsimilar at $X$ with respect to $W^\ell_{X}$\, ,
\item $L_{Ar,r/A,\delta}(\cM)\cap B_{r}(X)\subseteq T_{\nu r}(W^\ell_{X})$\, .
\end{enumerate}
\end{corollary}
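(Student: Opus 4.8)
The plan is to prove Corollary \ref{c:Brakke_inductive} by an induction on $\ell$ run inside a single contradiction/compactness argument, using the Cone-splitting Lemma \ref{l:Brakke_cone_splitting} as the inductive step and the Quantitative Rigidity Lemma \ref{t:current_monotone_rigidity} to produce the starting $0$-selfsimilarity. Fix $\mu,\nu,\Lambda,n,N$. I would argue by contradiction: suppose no such $\delta$ and $A$ exist, so that for each $\alpha$ (with $\delta_\alpha=1/\alpha\to 0$ and $A_\alpha\to\infty$ chosen along the way) there are flows $\cM^\alpha\in\cB^n_\Lambda(B_2(0^{N,1}))$, scales $r_\alpha\le 1/A_\alpha$, and points $X_\alpha\in L_{A_\alpha r_\alpha, r_\alpha/A_\alpha,\delta_\alpha}(\cM^\alpha)$ for which the conclusion fails for every $0\le\ell\le n+2$ and every candidate $W$. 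After translating $X_\alpha$ to the origin and parabolically rescaling by $1/r_\alpha$, we may assume $X_\alpha=0^{N,1}$ and $r_\alpha=1$, and then $\cW_{A_\alpha, A_\alpha^{-1}}(\cM^\alpha,0)\le\delta_\alpha$; by the density bounds (\ref{density_bounds}) and Brakke compactness we may pass to a subsequential limit flow $\cM^\infty$ which, since the Huisken energy over the whole window $(A_\alpha, A_\alpha^{-1})$ tends to $0$, has $\Theta(\cM^\infty,0,\cdot)$ constant on $(0,\infty)$, hence is $0$-selfsimilar on all of $\dR^{N,1}$.

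Now run the induction. Let $\ell$ be the \emph{maximal} number of selfsimilarities of $\cM^\infty$ at $0^{N,1}$ — that is, $\cM^\infty$ is $\ell$-selfsimilar with respect to some $W^\ell$ but not $(\ell+1)$-selfsimilar; note $0\le\ell\le n+2$ since there is always at least the $0$-selfsimilarity. I claim (i) and (ii) hold for $\cM^\alpha$ with this $\ell$ for $\alpha$ large and a suitable $W^\ell_{X_\alpha}$, which is the desired contradiction. For (i): since $d_{\cB}$ is weak enough, $d_{\cB}(\cM^\alpha, \cM^\infty)\to 0$, so $\cM^\alpha$ is $(\mu,1,\ell)$-selfsimilar at $0^{N,1}$ for $\alpha$ large, with respect to the translate of $W^\ell$ (or a nearby $W$; one reads off the plane $V$, and the disappearing/shrinking/static type, from $\cM^\infty$). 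For (ii): suppose not, so after passing to a further subsequence there are points $Y_\alpha\in L_{A_\alpha,A_\alpha^{-1},\delta_\alpha}(\cM^\alpha)\cap B_1(0^{N,1})$ with $Y_\alpha\notin T_{\nu}(W^\ell_{X_\alpha})$; pass to a limit $Y_\infty=(y_\infty,s_\infty)\in B_1\setminus T_\nu(W^\ell)$. Since $Y_\alpha$ is $\delta_\alpha$-good at scales between $A_\alpha$ and $A_\alpha^{-1}$ with $A_\alpha\to\infty$, Lemma \ref{t:current_monotone_rigidity} (applied at scale, say, $1$ around $Y_\alpha$, using $\delta_\alpha$ smaller than the $\delta(\varepsilon',\ldots)$ from that lemma) gives that $\cM^\alpha$ is $(\varepsilon',1,0)$-selfsimilar at $Y_\alpha$ for any prescribed $\varepsilon'$, hence $\cM^\infty$ is $0$-selfsimilar at $Y_\infty$. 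But $Y_\infty\notin W^\ell$, so the qualitative Cone-splitting principle for Brakke flows forces $\cM^\infty$ to be $(\ell+1)$-selfsimilar at $0^{N,1}$ (in every case of that principle: whether $s_\infty$ vanishes or not, whether $y_\infty\in V$ or not, the output is $(\ell+1)$-selfsimilar with respect to an enlarged $W$ — the only subtlety is that when $y_\infty\in V$ and $W$ is of shrinker/quasistatic type the conclusion is still only $\ell$-selfsimilar quasistatic, but in that case $W^\ell$ is enlarged in the \emph{time} direction to include $Y_\infty$, contradicting $Y_\infty\notin T_\nu(W^\ell)$ once $\nu$-neighborhoods are accounted for, or else we are in a genuine $(\ell+1)$ case). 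Either way we contradict maximality of $\ell$. This contradiction proves the corollary.

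To make the $\nu$-neighborhood bookkeeping precise one actually wants the \emph{quantitative} Cone-splitting Lemma \ref{l:Brakke_cone_splitting} rather than just its qualitative predecessor: feeding in $\rho=\nu$, $R=A_\alpha$ (or a fixed large $R$ after a harmless further rescaling), and the prescribed final $\varepsilon$, one gets a $\delta=\delta(\mu,\nu,\ldots)$ so that hypotheses (1)–(2) there are exactly ``$\cM^\alpha$ is $(\delta,R,\ell)$-selfsimilar at $0$'' and ``there is $Y\in B_1\setminus T_\nu(W^\ell)$ at which $\cM^\alpha$ is $(\delta,2,0)$-selfsimilar'' — both supplied by the compactness step and by Lemma \ref{t:current_monotone_rigidity} for $\alpha$ large — and the conclusion is that $\cM^\alpha$ is already $(\varepsilon,1,\ell+1)$-selfsimilar. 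Iterating this at most $n+2$ times, starting from the $0$-selfsimilarity handed to us by rigidity, terminates at some maximal $\ell\le n+2$ for which (2) must fail to be violatable, i.e. for which $L_{A_\alpha,A_\alpha^{-1},\delta}(\cM^\alpha)\cap B_1(0^{N,1})\subseteq T_\nu(W^\ell)$. Unwinding the rescaling gives the statement for general $r\le 1/A$.

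The main obstacle, and where I would spend the most care, is the case analysis around quasistatic and static limits: one must track not just the spatial plane $V$ but also the disappearing time $T$ (and the static/shrinker dichotomy), verify that in every branch of the Cone-splitting principle either the number of selfsimilarities strictly increases \emph{or} the spacetime set $W$ genuinely enlarges so as to swallow the offending point $Y$ (contradicting $Y\notin T_\nu(W)$), and check that the ``distance from a spacetime product set of type $W^\ell$'' being controlled is compatible with the chosen pseudometric $d_{\cB}$ and with the definition of $(\varepsilon,r,\ell)$-selfsimilar in Definition \ref{d:current_almost_conical}. The parabolic scaling $\cD_\lambda(x,t)=(\lambda x,\lambda^2 t)$ means time-translations scale quadratically, so one has to be a little careful that $\max\{s,T\}$-type expressions behave well under the blow-down; this is routine but is exactly the place where the ``possibility of quasistatic tangent flows causes some additional difficulties'' remark bites.
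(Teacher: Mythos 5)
Your proposal is correct and matches what the paper intends: the paper itself gives no proof beyond the one-line indication ``Using also Lemma \ref{t:current_monotone_rigidity}, by induction/contradiction we now obtain,'' and your argument --- rescale so $X_\alpha=0$, $r_\alpha=1$, use the vanishing Huisken energy on the widening window to extract a globally $0$-selfsimilar limit $\cM^\infty$, let $\ell$ be its maximal number of symmetries (with $W^\ell$ chosen maximal in the quasistatic-$T$ direction), and feed the rigidity lemma plus the cone-splitting lemma into a contradiction against that maximality --- is exactly the induction/contradiction scheme the authors have in mind. You also correctly flag the one genuine bookkeeping subtlety, namely that in the quasistatic branches cone-splitting may enlarge $W$ in the time direction rather than increase $\ell$; as you observe, the $T$-value strictly increases by at least $\rho^2$ each time and is bounded on $B_1$, so either version of your argument still terminates. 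One small thing worth pinning down when you write this out fully: in claim (ii) you should fix the candidate $W^\ell_{X_\alpha}$ to be the (translate of the) limiting $W^\ell$ itself before extracting $Y_\alpha\notin T_\nu(W^\ell_{X_\alpha})$, so that the offending limit point $Y_\infty$ genuinely satisfies $\dist(Y_\infty,W^\ell)\ge\nu$ and Lemma \ref{l:Brakke_cone_splitting} applies with $\rho=\nu/2$; and the iteration of the quantitative lemma requires nesting the $\delta$'s (the output $\varepsilon$ at one stage is the input $\delta$ at the next), which you gesture at but do not spell out. Neither of these is a gap in the approach, only in the exposition.
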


To deal with the quasistatic case we need another lemma. It is a quantitative version of the fact that points in a quasistatic flow at times earlier than the vanishing time look like static points when viewed at small enough scales.

\begin{lemma}[Quantitative behavior in the quasistatic case]
\label{l:Brakke_quasistatic}
For all $\varepsilon,\gamma>0$, $\Lambda<\infty$ and $n<N$ there exists $\delta=\delta(\varepsilon,\gamma,\Lambda,n,N)>0$, such that the following holds: If $\cM\in\cB^n_\Lambda(B_2(0^{N,1}))$ is $(\delta,1,\ell)$-selfsimilar at $0^{N,1}$ with respect to $W=V^\ell\times(-\infty,T]$ and if $Y=(y,s)\in\cM\cap B_{1-2\gamma}(0^{N,1})$ with $s\leq T-(2\gamma)^2$ then $\cM$ is $(\varepsilon,\gamma,\ell+2)$-selfsimilar at $Y$ with respect to $W=(y+V^\ell)\times\dR$.
\end{lemma}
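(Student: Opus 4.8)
The plan is to argue by contradiction and compactness, in the spirit of Lemma~\ref{t:current_monotone_rigidity} and of the proof of (\ref{relation}). The structural fact being exploited is that a quasistatic flow $N_{-1}\times(-\infty,T]$ coincides, on any spacetime region all of whose times are strictly below $T$, with the \emph{static} flow $N_{-1}\times\dR$; so zooming in at a point strictly before the vanishing time gives back a static flow, with the extra time direction now a genuine symmetry (this is the jump from $d(\cN)$ to $d(\cN)+2$ in passing from quasistatic to static). Suppose the conclusion fails for some $\varepsilon,\gamma,\Lambda,n,N$. Then for every $\alpha$ there is $\cM^\alpha\in\cB^n_\Lambda(B_2(0^{N,1}))$ which is $(1/\alpha,1,\ell)$-selfsimilar at $0^{N,1}$ with respect to $W^\alpha=V^\alpha\times(-\infty,T^\alpha]$ --- i.e.\ there is an $\ell$-selfsimilar quasistatic flow $\cN^\alpha=N^\alpha_{-1}\times(-\infty,T^\alpha]$, with $N^\alpha_{-1}$ a stationary cone splitting off the $\ell$-plane $V^\alpha$ and $T^\alpha\geq 0$, satisfying $d_{\cB}(\cM^\alpha,\cN^\alpha)<1/\alpha$ --- together with a point $Y^\alpha=(y^\alpha,s^\alpha)\in\cM^\alpha\cap B_{1-2\gamma}(0^{N,1})$ with $s^\alpha\leq T^\alpha-(2\gamma)^2$, for which $\cM^\alpha$ is \emph{not} $(\varepsilon,\gamma,\ell+2)$-selfsimilar at $Y^\alpha$ with respect to $(y^\alpha+V^\alpha)\times\dR$.

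First I would extract a limit. By the Brakke compactness theorem and the uniform mass bound, after a subsequence $\cM^\alpha\to\cM$. The flows $\cN^\alpha$ have uniform local mass bounds (as in the proof of (\ref{relation}), since mass is non-increasing under the flow), so $N^\alpha_{-1}$ subconverges to a stationary cone $N_{-1}$ splitting off $V:=\lim V^\alpha$, and $T^\alpha\to T\in[0,\infty]$; hence $\cN^\alpha\to\cN:=N_{-1}\times(-\infty,T]$ (read as $N_{-1}\times\dR$ when $T=\infty$). Since $d_{\cB}(\cM^\alpha,\cN^\alpha)<1/\alpha$ and $d_{\cB}$ is compatible with Brakke convergence, $d_{\cB}(\cM,\cN)=0$; since $d_{\cB}$ separates selfsimilar solutions (the property used in the proof of (\ref{relation})), $\cM$ agrees with the quasistatic flow $\cN=N_{-1}\times(-\infty,T]$ on $B_1(0^{N,1})$. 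We also pass $y^\alpha\to y$, $s^\alpha\to s$ to the limit, with $d(Y,0^{N,1})\leq 1-2\gamma$ and $s\leq T-(2\gamma)^2$ (the latter being automatic when $T=\infty$).

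Next I would zoom in at $Y=(y,s)$. For $X\in B_\gamma(Y)$ one has $d(X,0^{N,1})<\gamma+(1-2\gamma)=1-\gamma<1$, so $B_\gamma(Y)\subset B_1(0^{N,1})$; and every time occurring in $B_\gamma(Y)$ is $<s+\gamma^2\leq T-3\gamma^2<T$. Hence $\cM$ coincides on $B_\gamma(Y)$ with the static flow $N_{-1}\times\dR$. As parabolic translation and dilation are continuous under Brakke convergence and $Y^\alpha\to Y$, we get $(\cM^\alpha)_{Y^\alpha,\gamma}\to\cM_{Y,\gamma}$, and on $B_1(0^{N,1})$ the limit equals $\big(\tfrac{1}{\gamma}(N_{-1}-y)\big)\times\dR$. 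Writing $N_{-1}=C'\times V$ for a cone $C'$ in $V^\perp$, only the $V^\perp$-component of $y$ matters, so the spatial factor is again a stationary cone splitting off $V$. Thus $\cM_{Y,\gamma}$ is, on $B_1(0^{N,1})$, a static $(\ell+2)$-selfsimilar flow whose symmetry plane is $V\times\dR$ through the origin --- i.e.\ corresponding to $(y+V)\times\dR$ in the coordinates of $\cM$.

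Finally, the book-keeping of the plane. Choosing rotations $R_\alpha\to\mathrm{Id}$ with $R_\alpha V=V^\alpha$, the flows $\widetilde\cN^\alpha:=R_\alpha\,\cM_{Y,\gamma}$ are static and $(\ell+2)$-selfsimilar with symmetry plane $V^\alpha\times\dR$ through the origin --- hence, after recentering at $Y^\alpha$, with associated subspace $(y^\alpha+V^\alpha)\times\dR$ --- and $d_{\cB}(\cM_{Y,\gamma},\widetilde\cN^\alpha)\to 0$, since the test functions in the definition of $d_{\cB}$ have compact support in $B_1(0^N)$ and $\phi\circ R_\alpha\to\phi$ uniformly. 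Therefore
\[
d_{\cB}\big((\cM^\alpha)_{Y^\alpha,\gamma},\,\widetilde\cN^\alpha\big)\;\leq\;d_{\cB}\big((\cM^\alpha)_{Y^\alpha,\gamma},\,\cM_{Y,\gamma}\big)+d_{\cB}\big(\cM_{Y,\gamma},\,\widetilde\cN^\alpha\big)\;\longrightarrow\;0,
\]
which is $<\varepsilon$ for $\alpha$ large, i.e.\ $\cM^\alpha$ \emph{is} $(\varepsilon,\gamma,\ell+2)$-selfsimilar at $Y^\alpha$ with respect to $(y^\alpha+V^\alpha)\times\dR$, contradicting the choice of $\cM^\alpha$. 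The step I expect to be the main obstacle is the middle one: identifying the Brakke limit $\cM$ with an honest quasistatic flow through the separation property of $d_{\cB}$ (including the degenerate case $T^\alpha\to\infty$, where $\cM$ is already static on $B_1$), and then tracking the symmetry plane and base point through the limit so that the conclusion holds \emph{with respect to} precisely the subspace $(y^\alpha+V^\alpha)\times\dR$ demanded by the statement, rather than merely some nearby static flow.
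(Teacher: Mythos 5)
Your proposal is correct and uses the same compactness-and-contradiction argument that the paper uses. The paper's own proof is a single sentence ("If not, passing to limits we obtain a flow $\cM$ that is $\ell$-selfsimilar on $B_1$ with respect to $W=V^\ell\times(-\infty,T]$ and a point $Y$ $\ldots$ such that $\cM$ is not $(\varepsilon,\gamma,\ell+2)$-selfsimilar at $Y$, a contradiction"); you have supplied the details it elides, including the identification of the limit via the separation property of $d_{\cB}$, the observation that $B_\gamma(Y)$ sits strictly before time $T$ so the quasistatic limit is genuinely static there, the degenerate case $T^\alpha\to\infty$, and the rotation bookkeeping needed to land on the exact plane $(y^\alpha+V^\alpha)\times\dR$ rather than the nearby limiting plane $(y+V)\times\dR$.
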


\begin{proof}
If not, passing to limits we obtain a flow $\cM$ that is $\ell$-selfsimilar on $B_1(0^{N,1})$ with respect to $W=V^\ell\times (-\infty,T]$ and a point $Y=(y,s)\in \overline{B}_{1-2\gamma}(0^{N,1})$ with $s\leq T-(2\gamma)^2$ such that $\cM$ is not $(\varepsilon,\gamma,\ell+2)$-selfsimilar at $Y$ with respect to $W=(y+V^\ell)\times\dR$, a contradiction.
\end{proof}

\subsection{Conclusion of the argument}\label{brakke_conclusion}
\begin{proof}[Proof of Theorem \ref{t:Brakke_quant_strat}]
Let $\varepsilon,\eta,\Lambda,n,N,\cM$ be as in the statement of the theorem. It is convenient to choose $\gamma:=c_0(N)^{-\frac{2}{\varepsilon}}$, where $c_0(N)$ is a geometric constant that only depends on the dimension and will appear below (roughly a doubling constant). Now we apply Corollary \ref{c:Brakke_inductive} with $\nu=\gamma/2$ and $\mu\leq\eta$ small enough such that also the below application of Lemma \ref{l:Brakke_quasistatic} is justified, and get constants $\delta$ and $A$. Choose an integer $q$, such that $\gamma^q\leq 1/(2A)$. Setting $Q:=\lfloor (2q+1)\delta^{-1}\Lambda/\pi^{n/2}\rfloor+q$, from the argument in Section \ref{ss:energy_decomposition} we get a decomposition of $\cM\cap B_1(0)$ into at most $2\beta^Q$ nonempty sets $E_{T^\beta}(\cM)$.  The factor 2 is just to make it also work for $\beta=1$.

\begin{lemma}[Covering Lemma]
\label{l:Brakke_covering}
There exists $c_0(N)<\infty$ such that each set $\cS^j_{\eta,\gamma^{\beta}}(\cM)\cap E_{T^\beta}(\cM)$ can be covered by at most $c_0(c_0\gamma^{-(N+2)})^Q(c_0\gamma^{-j})^{\beta-Q}$ balls of radius $\gamma^\beta$.
\end{lemma}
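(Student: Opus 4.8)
The plan is to construct the covering by a multiscale iteration over the $\beta$-tuple $T^\beta = (T^\beta_\alpha)_{1\le \alpha\le\beta}$ defining $E_{T^\beta}(\cM)$, producing finitely many balls of radius $\gamma^\beta$ whose count is controlled by a product with one factor per scale: a ``cheap'' factor $c_0\gamma^{-(N+2)}$ at each of the at most $Q$ bad scales (where $T^\beta_\alpha=1$), and an ``expensive'' (small) factor $c_0\gamma^{-j}$ at each of the remaining $\beta - Q$ good scales (where $T^\beta_\alpha=0$). First I would fix the parameters: apply Corollary~\ref{c:Brakke_inductive} with $\mu\le\eta$ and $\nu=\gamma/2$ to obtain $\delta$ and $A$, and choose $q$ with $\gamma^q\le 1/(2A)$, exactly as set up just above. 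The key geometric input is that a ball of radius $\gamma^{\alpha}$ can be covered by at most $c_0\gamma^{-(N+2)}$ balls of radius $\gamma^{\alpha+1}$ for a dimensional doubling constant $c_0(N)$ (recall $\dim(\dR^{N,1},d)=N+2$); this gives the bad-scale factor for free.

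The heart of the argument is the good-scale step. Suppose at some scale $\gamma^\alpha$ we already have a ball $B_{\gamma^\alpha}(X)$ with $X\in \cS^j_{\eta,\gamma^\beta}(\cM)\cap E_{T^\beta}(\cM)$ and $T^\beta_\alpha = 0$, i.e. $\cW_{\gamma^{\alpha-q},\gamma^{\alpha+q}}(\cM,X)\le\delta$, so $X\in L_{A\gamma^\alpha,\gamma^\alpha/A,\delta}(\cM)$ by the choice $\gamma^q\le 1/(2A)$. Then Corollary~\ref{c:Brakke_inductive} produces $0\le\ell\le n+2$ and a subspace-type set $W^\ell_X$ such that $\cM$ is $(\mu,\gamma^\alpha,\ell)$-selfsimilar at $X$ with respect to $W^\ell_X$, and, crucially, $L_{A\gamma^\alpha,\gamma^\alpha/A,\delta}(\cM)\cap B_{\gamma^\alpha}(X)\subseteq T_{\nu\gamma^\alpha}(W^\ell_X)$. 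Because $X\in\cS^j_{\eta,\gamma^\beta}(\cM)$ and $\mu\le\eta$, we must have $\ell\le j$: otherwise $\cM$ would be $(\eta,\gamma^\alpha,j+1)$-selfsimilar at $X$, contradicting membership in the quantitative stratum at scale $\gamma^\alpha\ge\gamma^\beta$. Here one must handle the quasistatic case of $W^\ell_X$: if $W^\ell_X = V^\ell\times(-\infty,T]$, then Lemma~\ref{l:Brakke_quasistatic} lets us replace $W^\ell_X$ near earlier-time points of $\cM$ by the static set $(y+V^\ell)\times\dR$, which has the same relevant dimension $\ell$ for covering purposes (this is precisely the ``suffices to cover a neighborhood of the final time-slice'' reduction mentioned in Section~\ref{s:current_decomposition}); so effectively $W^\ell_X$ behaves like an $\ell$-dimensional affine object (with $\ell\le j$) intersected with the relevant time range. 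Consequently $\cS^j_{\eta,\gamma^\beta}(\cM)\cap E_{T^\beta}(\cM)\cap B_{\gamma^\alpha}(X)$ lies in a $\nu\gamma^\alpha = (\gamma/2)\gamma^\alpha$-tubular neighborhood of an $\le j$-dimensional set inside $B_{\gamma^\alpha}(X)$, and such a set can be covered by at most $c_0\gamma^{-j}$ balls of radius $\gamma^{\alpha+1}$ — this is the good-scale factor.

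Iterating these two steps from scale $\gamma^0=1$ down to scale $\gamma^\beta$: start with one ball $B_1(0)$; at each scale $\gamma^\alpha$ with $1\le\alpha\le\beta$, replace each current ball of radius $\gamma^{\alpha-1}$ by at most $c_0\gamma^{-(N+2)}$ balls of radius $\gamma^\alpha$ if $T^\beta_\alpha=1$, and by at most $c_0\gamma^{-j}$ such balls if $T^\beta_\alpha=0$, centered at points of $\cS^j_{\eta,\gamma^\beta}(\cM)\cap E_{T^\beta}(\cM)$ (discarding balls that miss this set). Since $T^\beta$ has at most $Q$ nonzero entries and hence at least $\beta-Q$ zero entries, the total number of balls of radius $\gamma^\beta$ is at most $c_0\cdot(c_0\gamma^{-(N+2)})^{Q}(c_0\gamma^{-j})^{\beta-Q}$, as claimed. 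The main obstacle is the bookkeeping in the good-scale step around which ``scale'' $W^\ell_X$ is adapted to and making the quasistatic reduction rigorous — ensuring that the tubular-neighborhood containment at scale $\gamma^\alpha$ genuinely forces the centers at the next scale into a neighborhood of an $\le j$-dimensional object uniformly, rather than one depending badly on $X$; this is handled by the uniform constants $A,\delta$ from Corollary~\ref{c:Brakke_inductive} and by absorbing the quasistatic $(-\infty,T]$ factor via Lemma~\ref{l:Brakke_quasistatic}, at the cost of only covering near the top time-slice.
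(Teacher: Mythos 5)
Your recursive covering scheme is the paper's: starting from a single ball of radius $1$ and refining down to scale $\gamma^\beta$, paying the trivial doubling factor $c_0\gamma^{-(N+2)}$ at each of the at most $Q$ bad scales and the improved factor $c_0\gamma^{-j}$ at good scales via Corollary~\ref{c:Brakke_inductive}, and the product bookkeeping is correct. The one place the argument as written does not work is the quasistatic case of the good-scale step. You claim that Lemma~\ref{l:Brakke_quasistatic} lets you ``replace $W^\ell_X=V^\ell\times(-\infty,T]$ near earlier-time points by the static set $(y+V^\ell)\times\dR$, which has the same relevant dimension $\ell$.'' But $(y+V^\ell)\times\dR$ has \emph{parabolic} dimension $\ell+2$, not $\ell$, so ``replacing'' by a static object does not improve the count; and a $\nu r$-tube around $(x+V^\ell)\times(-\infty,T]$ intersected with the parabolic ball contains a full $(\ell+2)$-dimensional slab when $T$ lies inside the ball, whose covering needs on the order of $\gamma^{-(\ell+2)}$ balls at the next scale. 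Since the problematic case is exactly $\ell\geq j-1$, that is $\gamma^{-(j+1)}$ or worse, which would destroy the desired $\gamma^{-j}$ factor.

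The correct use of Lemma~\ref{l:Brakke_quasistatic} is as an \emph{exclusion}, not a replacement. Applied at the current scale, it shows that any $Y=(y,s)\in\cM\cap B_{\gamma^\beta}(X)$ with $s\leq T-(2\gamma^{\beta+1})^2$ is $(\varepsilon,\gamma^{\beta+1},\ell+2)$-selfsimilar; since in the problematic case $\ell+2\geq j+1$, such a $Y$ is too symmetric to lie in $\cS^j_{\eta,\gamma^{\beta+1}}(\cM)$ (for $\varepsilon\leq\eta$). Hence the set that actually needs to be re-covered at the next scale is confined to the thin time slab $s\geq T-(2\gamma^{\beta+1})^2$ (and $s\lesssim T$ from the tubular containment) together with the $\gamma^{\beta+1}$-tube around the $\ell$-dimensional spatial plane, $\ell\leq j$, and that set admits a covering by $c_0\gamma^{-j}$ balls. (A small further bookkeeping point: Corollary~\ref{c:Brakke_inductive} is applied at scale $2\gamma^\beta$ with $\nu=\gamma/2$ so that the tubular containment comes out at radius $\gamma^{\beta+1}$, and the improvement at the $\beta\to\beta+1$ step is governed by $T_\beta(X)$; your indexing should be adjusted accordingly, but this is harmless.) With the exclusion argument in place of the ``replacement,'' the rest of your proof matches the paper's.
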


\begin{proof}
We will recursively define a covering. For $\beta=0$ pick some minimal covering of $\cS^j_{\eta,\gamma^{0}}(\cM)\cap B_1(0)$ by balls of radius $1$ with centers in  $\cS^j_{\eta,\gamma^{0}}(\cM)\cap B_1(0)$. Note that $\cS^j_{\eta,\gamma^{\beta+1}}(\cM)\subset \cS^j_{\eta,\gamma^{\beta}}(\cM)$. Let $T^{\beta}$ be the $\beta$-tupel obtained from dropping the last entry from $T^{\beta+1}$. Then we also have $E_{T^{\beta+1}}(\cM)\subset E_{T^{\beta}}(\cM)$.

\vskip2mm
\noindent
{\bf Recursion step.}
For each ball $B_{\gamma^{\beta}}(X)$ in the covering of $\cS^j_{\eta,\gamma^{\beta}}(\cM)\cap E_{T^{\beta}}(\cM)$,
take a minimal covering of $B_{\gamma^{\beta}}(X)\cap \cS^j_{\eta,\gamma^{\beta+1}}(\cM)\cap  E_{T^{\beta+1}}(\cM)$
by balls of radius $\gamma^{\beta+1}$ with centers in
$B_{\gamma^{\beta}}(X)\cap \cS^j_{\eta,\gamma^{\beta+1}}(\cM)\cap  E_{T^{\beta+1}}(\cM)$.
\vskip2mm

Let us now explain that this covering has indeed the desired properties. First observe that, for all $\beta$, the number of balls in a minimal covering from the recursion step is at most
\begin{equation}
c(N)\gamma^{-(N+2)}\, .
\end{equation}
However, if $T_\beta(X)=0$, then $X\in L_{2A\gamma^{\beta},2\gamma^\beta/A,\delta}(\cM)$ and Corollary \ref{c:Brakke_inductive} gives us $0\leq \ell\leq n+2$ and $W_X^\ell$ such that
\begin{enumerate}
\item $\cM$ is $(\mu,2\gamma^\beta,\ell)$-selfsimilar at $X$ with respect to $W^\ell_{X}$\, ,
\item $L_{2A\gamma^\beta,2\gamma^\beta/A,\delta}(\cM)\cap B_{2\gamma^\beta}(X)\subseteq T_{\gamma^{\beta+1}}(W^\ell_{X})$ .
\end{enumerate}
Since $X\in\cS^j_{\eta,\gamma^{\beta}}(\cM)$ we must have $\ell\leq j$. Since $E_{T^\beta}(\cM)\subset L_{2A\gamma^\beta,2\gamma^\beta/A,\delta}(\cM)$ this implies the following better estimate for the number of balls in a minimal covering:
\begin{equation}\label{Brakke:bettervolume}
c(N)\gamma^{-j}\, .
\end{equation}
Indeed, the estimate is clear in the cases $W_X^\ell=(x+V^\ell)\times\{t\}$ and $W_X^\ell=(x+V^{\ell-2})\times\dR$, the case $W_X^\ell=(x+V^{\ell})\times(-\infty,T]$ requires some extra thought, but in fact only if $T\leq t+(2\gamma^\beta)^2$ and $\ell\geq j-1$ which we will assume now. So, if $Y=(y,s)\in B_{\gamma^{\beta}}(X)\cap \cS^j_{\eta,\gamma^{\beta+1}}(\cM)\cap  E_{T^{\beta+1}}(\cM)$, then by Lemma \ref{l:Brakke_quasistatic} we conclude $s\geq T-(2\gamma^{\beta+1})^2$ and thus (\ref{Brakke:bettervolume}) holds also in the quasistatic case.
By the quantitative differentiation argument, the better estimate (\ref{Brakke:bettervolume}) applies with at most $Q$ exceptions. This proves the lemma.
\end{proof}

We will now conclude the proof of Theorem \ref{t:Brakke_quant_strat} by estimating the volume of the covering. The volume of balls in $\dR^{N,1}$ satisfies
\begin{equation}
\label{e:harm_volume}
\Vol(B_{\gamma^\beta}(X))= w_N(\gamma^{\beta})^{N+2}\, ,
\end{equation}
which together with the choice of $\gamma$ and the fact that polynomials grow slower than exponentials, i.e. with
$$
c_0^\beta=(\gamma^\beta)^{-\frac{\varepsilon}{2}}\, ,
$$
$$
\beta^Q\leq c(N,Q)(\gamma^\beta)^{-\frac{\varepsilon}{2}}\, ,
$$
gives (recalling again the decomposition of $\cM\cap B_1(0)$ into at most $2\beta^Q$ nonempty sets $\cS^j_{\eta,\gamma^{\beta}}(\cM)\cap E_{T^\beta}(\cM)$ and the Covering Lemma \ref{l:Brakke_covering})
\begin{equation}
\label{e:main}
\begin{aligned}
\Vol(\cS^j_{\eta,\gamma^\beta}(\cM)\cap B_1(0))
&\leq 2\beta^Q \left[c_0  (c_0\gamma^{-(N+2)})^{Q}  (c_0\gamma^{-j})^{\beta-Q} \right] w_N(\gamma^\beta)^{N+2}\\
&\leq c(N,Q,\varepsilon)  \beta^Q c_0^{\beta}(\gamma^\beta)^{N+2-j}\\
&\leq c(N,Q,\varepsilon)(\gamma^\beta)^{N+2-j-\varepsilon}\, .
\end{aligned}
\end{equation}
From the above, for all $0<r<1$, we get
$$
 \begin{aligned}
 \Vol(\cS^k_{\eta,r}(\cM)\cap B_1(0))
&\leq  c(n,Q,\varepsilon) (\gamma^{-1}r)^{N+2-j-\varepsilon}\\
& \leq c(\varepsilon,\eta,\Lambda,n,N) r^{N+2-j-\varepsilon}\, .
\end{aligned}
$$
It follows that
$$
\Vol\left(T_r(\cS^j_{\eta,r}(\cM))\cap B_1(0)\right)\leq Cr^{N+2-j-\varepsilon}\qquad (0<r<1)\, .
$$
for another constant $C=C(\varepsilon,\eta,\Lambda,n,N)$, and this finishes the proof of Theorem \ref{t:Brakke_quant_strat}.
\end{proof}

\section{Elliptic regularization and $k$-convexity}

In this section we prove Theorem \ref{t:Brakke_cylinders}. We consider a Brakke flow (level-set flow) $\cM=\{(M_t,t)\, | \, t\geq 0\}$ starting at a $k$-convex smooth compact embedded hypersurface $M_0^n\subset\dR^{n+1}$. The flow is smooth up to the first singular time $T>0$. Recall that $k$-convexity means that the sum of the $k$ smallest principal curvatures is nonnegative. In particular, the mean curvature is nonnegative. It follows from the evolution equations that the second fundamental form behaves as
\begin{equation}
\partial_t A^i_j=\Delta A^i_j + \abs{A}^2\! A^i_j \qquad \qquad \partial_t h = \Delta h + \abs{A}^2\! h \, ,
\end{equation}
and hence that $k$-convexity is preserved up to the first singular time. In fact,
\begin{equation}\label{strictellconvex}
\lambda_1+\ldots +\lambda_{k}\geq c h\qquad \qquad h\geq c \, ,
\end{equation}
for some $c>0$ and let's say $T/4\leq t< T$ (see e.g. \cite[Prop 2.6]{HuiskenSinestrari_Surgery} for more detailed explanations). It is immediate from (\ref{strictellconvex}) that shrinking cylinders $\dR^j\times S^{n-j}$ with $j\geq k$ cannot arise as tangent flows (or special limit flows) at the first singular time. To rule them out also at subsequent times we will use elliptic regularization (see Ilmanen \cite{Ilmanen_EllipticRegularization}) following White \cite{White_subsequent} closely.\\

\begin{proof}[Proof of Theorem \ref{t:Brakke_cylinders}]

The idea, motivated by the work of \cite{White_subsequent}, is that because the flows are $k$-convex, we have that $M_t\times \dR$ arises as limit for $\alpha\to\infty$ of a family of \emph{smooth} flows $N_t^\alpha$, to which the maximum principle can be applied. 
 In more detail, let $\Omega\subset R^{n+1}$ be the region bounded by $M_{T/4}$. Let $K$ be a slightly smaller compact set, say the closure of the region bounded by $M_{T/2}$. Let $N^\alpha_0$ be an integral current in $\overline{\Omega}\times \dR$ that minimizes the functional
\begin{equation}
N\mapsto\int_N e^{-\alpha x\cdot e_{n+2}}\, dA
\end{equation}
subject to $\partial N=\partial \Omega \times \{0\}$. Since $\partial \Omega$ is mean-convex, $N^\alpha_0$ is given by the graph of a smooth function $f_\alpha:\overline{\Omega}\to[0,\infty)$ (see the appendix of \cite{White_subsequent} for a proof). Furthermore, $N^\alpha_0$ satisfies the Euler-Lagrange equation,
\begin{equation}\label{eleqn}
H=-\alpha e_{n+2}^\perp\, ,
\end{equation}
and the mean curvature is strictly positive. Using the Euler-Lagrange equation (\ref{eleqn}) it follows that $h^{-1}A^i_j$ satisfies
\begin{equation}
\Delta\frac{A}{h}=-2\langle\nabla\log h, \nabla\frac{A}{h}\rangle \, .
\end{equation}
Note that $(\lambda_1+\ldots+\lambda_{k+1})/h$ is a concave function on the space of symmetric matrices with positive trace. Thus, the tensor maximum principle (see e.g. \cite{Hamilton_maximumprinciple}) implies that the minimum of the function $(\lambda_1+\ldots+\lambda_{k+1})/h$ over $N^\alpha_0\cap(K\times\dR)$ is attained on $N^\alpha_0\cap(\partial K\times\dR)$.

Note that $N^{\alpha}_t:=\textrm{graph}(f_\alpha-\alpha t)$ is a family of surfaces (with boundary) in $\Omega\times\dR$ translating by mean curvature. For $\alpha\to\infty$ the flows $N_t^\alpha$ converge to the Brakke flow $M_t\times \dR$. By the local regularity theorem \cite{Brakke_book,White_regularitytheorem} the convergence is smooth at regular points. Splitting off the $\dR$-factor amounts to replacing $k+1$ by $k$ and we conclude that
\begin{equation}\label{ellregest}
\inf_{X=(x,t)\in\cM\setminus\cS(\cM)\,\mathrm{with}\, t\geq T/2} \frac{\lambda_1+\ldots+\lambda_{k}}{h}(X)\geq c\, .
\end{equation}
Inequality (\ref{ellregest}) implies that shrinking cylinders $\dR^j\times S^{n-j}$ with $j\geq k$ cannot arise as tangent flows of $\cM$. Recalling that White already proved that the only possible tangent flows in the mean-convex case are shrinking cylinders, spheres and planes, this completes the proof of the classification of the tangent flows in the $k$-convex case.\\

It remains to deal with more general limit flows $\cM_{X_\alpha,r_\alpha}\to\cN$ where the point $X_\alpha$ is allowed to vary. Note that the class of limit flows is strictly larger, and includes for example also the rotationally symmetric translating solitons. However, in our case we have the extra assumption that $\cN$ is $k$-selfsimilar. In particular, its backwards portion is selfsimilar. Thus, by \cite[Thm. 1]{White_nature} $\cN$ is either a static multiplicity one plane or a shrinking sphere or cylinder. In the case $n\geq 7$ this is stated there only for so-called special limit flows, but using White's work \cite{White_subsequent} it also holds for general limit flows.  Now, since $\cN$ is $k$-symmetric, it must either be a multiplicity one plane or a shrinking cylinder $\dR^j\times S^{n-j}$ with $j\geq k$. As before, these cylinders are excluded by (\ref{ellregest}). This finishes the proof of Theorem \ref{t:Brakke_cylinders}.
\end{proof}

\section{Estimates for $k$-convex mean curvature flows}

It is now easy to finish the proofs of Theorem \ref{t:Brakke_regularity} and Corollary \ref{t:Brakke_estimates}. For this, we need the following new $\varepsilon$-regulartiy theorem. Roughly speaking, it says that enough approximate degrees of symmetry imply regularity.

\begin{theorem}[$\varepsilon$-regularity]\label{t:Brakke_epsregularity}
Let $\cM$ be a Brakke flow starting at a $k$-convex smooth compact embedded hypersurface $M_0^n\subset \dR^{n+1}$. Then there exists an $\varepsilon=\varepsilon(M_0)>0$ such that: If $\cM$ is $(\varepsilon,\varepsilon^{-1}r,k)$-selfsimilar at some $X\in\cM$ for some $0<r\leq \varepsilon$, then $r_{\cM}(X)\geq r$.
\end{theorem}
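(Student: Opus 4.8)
The plan is a standard compactness/contradiction argument, combining Theorem~\ref{t:Brakke_cylinders} with White's local regularity theorem. Suppose the statement fails. Then there is a sequence $\varepsilon_\alpha \to 0$, Brakke flows $\cM^\alpha$ each starting at a $k$-convex smooth compact embedded hypersurface (we may normalize and translate so that $X_\alpha = 0^{N,1}$), and radii $0 < r_\alpha \le \varepsilon_\alpha$, such that $\cM^\alpha$ is $(\varepsilon_\alpha, \varepsilon_\alpha^{-1} r_\alpha, k)$-selfsimilar at $0^{N,1}$ but $r_{\cM^\alpha}(0^{N,1}) < r_\alpha$. Rescale parabolically by $1/r_\alpha$: the rescaled flows $\tilde\cM^\alpha := \cM^\alpha_{0,r_\alpha}$ are $(\varepsilon_\alpha, \varepsilon_\alpha^{-1}, k)$-selfsimilar at $0^{N,1}$, and the regularity scale satisfies $r_{\tilde\cM^\alpha}(0^{N,1}) < 1$. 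Since $r_\alpha \to 0$, the rescaled flows are defined on larger and larger parabolic balls, and using the density bounds (\ref{density_bounds}) and the Brakke compactness theorem, a subsequence converges to a limit flow $\cN$ defined on all of $\dR^{n+1,1}$. Because $\tilde\cM^\alpha$ is $(\varepsilon_\alpha, \varepsilon_\alpha^{-1}, k)$-selfsimilar with $\varepsilon_\alpha \to 0$, the limit $\cN$ is a genuine $k$-selfsimilar limit flow. But $\cN$ is a limit flow of a $k$-convex flow, so Theorem~\ref{t:Brakke_cylinders} forces $\cN$ to be a static multiplicity one plane.

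**Passing the regularity to the limit.** Now White's local regularity theorem applies: since $\cN$ is a static multiplicity one plane, it has Gaussian density $1$ at $0^{N,1}$, hence so do the $\tilde\cM^\alpha$ at scale (say) $1/2$ for $\alpha$ large. The local regularity theorem then gives that $\tilde\cM^\alpha \cap B_{1/4}(0^{N,1})$ is a smooth graph with $\sup |A| \le C$ and all derivatives controlled, uniformly in $\alpha$ (via the interior estimates (\ref{interiorest})). More precisely, smooth convergence $\tilde\cM^\alpha \to \cN$ on a fixed parabolic ball around $0^{N,1}$ implies that for $\alpha$ large the defining graph function is $C^\infty$-close to that of the plane, so in particular $\sup_{X' \in \tilde\cM^\alpha \cap B_{1}(0^{N,1})} |A|(X') \le 1$ and the relevant time-slices are smooth graphs. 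This says exactly $r_{\tilde\cM^\alpha}(0^{N,1}) \ge 1$, contradicting $r_{\tilde\cM^\alpha}(0^{N,1}) < 1$.

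**Main obstacle.** The delicate point is ensuring that $(\varepsilon_\alpha, \varepsilon_\alpha^{-1}, k)$-selfsimilarity at the \emph{varying} basepoints, in the limit, genuinely produces a $k$-selfsimilar \emph{limit flow} in the precise sense that Theorem~\ref{t:Brakke_cylinders} handles (which already allows limit flows with moving basepoints, not merely tangent flows). One must check that the Brakke-distance $d_{\cB}$ control at scale $\varepsilon_\alpha^{-1}$ implies, after passing to the limit, that $\cN$ has backwards-selfsimilar structure and $D(\cN) \ge k$; this uses that $d_{\cB}$ is strong enough to detect $0$-selfsimilarity (the mechanism already exploited in the proof of (\ref{relation})), together with the fact that the $k$-selfsimilar comparison flows $\cN^\alpha$ appearing in the definition have uniform local mass bounds and hence subconverge to a $k$-selfsimilar flow that must coincide with $\cN$. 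A second, more routine, technical issue is that Theorem~\ref{t:Brakke_cylinders} is stated for the global Brakke flow starting at a fixed $M_0$, whereas here the $M_0^\alpha$ vary; but since the conclusion of Theorem~\ref{t:Brakke_cylinders} is really a statement about which limit flows of $k$-convex flows can occur, and the estimate (\ref{ellregest}) from its proof is what is actually used, this causes no real trouble — one invokes (\ref{ellregest}) to rule out the cylinders $\dR^j \times S^{n-j}$ with $j \ge k$ in the limit, leaving only the static plane.
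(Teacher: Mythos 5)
Your argument has the right skeleton (compactness/contradiction, identify the blow-up limit as a $k$-selfsimilar limit flow, invoke Theorem~\ref{t:Brakke_cylinders} to conclude it is a static multiplicity one plane, then apply White's local regularity theorem to contradict $r_{\cM}(X_\alpha)<r_\alpha$). This is exactly the structure of the paper's proof. However, you have misread the statement in a way that creates a spurious difficulty. The theorem asserts the existence of $\varepsilon=\varepsilon(M_0)$ for a \emph{fixed} $k$-convex $M_0$, so the contradiction argument should keep the Brakke flow $\cM$ (and hence $M_0$) fixed and only vary $X_\alpha\in\cM$, $r_\alpha\le 1/\alpha$, and the selfsimilarity tolerance $1/\alpha$. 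You instead allow a sequence $\cM^\alpha$ of flows starting from varying $k$-convex initial hypersurfaces $M_0^\alpha$, which is strictly more than the theorem demands, and then you correctly notice that Theorem~\ref{t:Brakke_cylinders} is formulated for a single fixed $M_0$.

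Your proposed remedy for this self-inflicted problem is not adequate. You claim that one can simply invoke (\ref{ellregest}) ``in the limit'' to rule out cylinders $\dR^j\times S^{n-j}$ with $j\ge k$, but the constant $c$ in (\ref{ellregest}) depends on $M_0$ through the initial strict $k$-convexity (\ref{strictellconvex}), and there is no uniform positive lower bound for $c$ over all $k$-convex smooth compact embedded hypersurfaces. So if the $M_0^\alpha$ are allowed to vary freely, one could have $c_\alpha\to 0$ and the cylinders would not be excluded in the limit. This would be a genuine gap if the varying-$M_0^\alpha$ formulation were actually needed. It is not: once you fix $\cM$ and take $\cM^\alpha:=\cM_{X_\alpha,r_\alpha}$, as the paper does, these are limit flows of the single $k$-convex flow $\cM$, Theorem~\ref{t:Brakke_cylinders} (which is already stated for general limit flows with moving basepoints) applies verbatim, and the argument closes cleanly. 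The part of your write-up on passing $k$-selfsimilarity to the limit via the properties of $d_{\cB}$ is the same mechanism used in the proof of (\ref{relation}) and is fine.
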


\begin{proof}
If not, there are $X_\alpha\in\cM$ and $r_\alpha\leq 1/\alpha$ such that $\cM$ is $(1/\alpha,\alpha r_\alpha,k)$-selfsimilar at $X_\alpha$ but $r_{\cM}(X_\alpha)<r_\alpha$. Consider the blowup sequence $\cM^\alpha:=\cM_{X_\alpha,r_\alpha}$. On the one hand, it satisfies $r_{\cM^\alpha}(0)<1$. On the other hand, after passing to a subsequence, $\cM^\alpha$ converges to some $k$-selfsimilar limit flow $\cN$. By Theorem \ref{t:Brakke_cylinders}, $\cN$ must be a multiplicity one plane. By the local regularity theorem \cite{Brakke_book,White_regularitytheorem} the convergence is smooth. Thus, $r_{\cM^\alpha}(0)\geq 1$ for $\alpha$ large enough, a contradiction.
\end{proof}

\begin{proof}[Proof of Theorem \ref{t:Brakke_regularity}]
Since the initial surface is smooth, the regularity scale is bounded below for small times. By comparison, $\cM$ remains in a compact spatial region and vanishes in finite time. Thus, we have reduced the problem to the local setting of Theorem \ref{t:Brakke_quant_strat}. After this reduction, the $\varepsilon$-regularity theorem implies $\cB_r(\cM)\subset \cS^{k-1}_{\eta,\eta^{-1}r}(\cM)$ for $\eta$ small enough and the claim follows from the volume estimate of Theorem \ref{t:Brakke_quant_strat} and the $\varepsilon$-regularity Theorem \ref{t:Brakke_epsregularity}.
\end{proof}

\begin{proof}[Proof of Corollary \ref{t:Brakke_estimates}]
Using the layer-cake formula, this follows immediately from Theorem \ref{t:Brakke_regularity}, the density bounds (\ref{density_bounds}) and the interior estimates (\ref{interiorest}).
\end{proof}

{\small

\bibliographystyle{plain}

}

\end{document}